\documentclass [12pt]{amsart}
\usepackage{amsmath}%
\usepackage{amsfonts}
\usepackage{amssymb}%
\usepackage{graphicx}
\usepackage{enumerate}

\newtheorem{theorem}{Theorem}[section]

\newtheorem{claim}[theorem]{Claim}

\newtheorem{lemma}[theorem]{Lemma}

\newtheorem*{thm1*}{Zalcman's Lemma}
\newtheorem*{thm2*}{Pang-Zalcman Lemma}
\newtheorem*{thm3*}{Local Pang-Zalcman Lemma}
\newtheorem*{thm4*}{Theorem 1}
\newtheorem*{remark*}{Remark}
\begin{document}
\title{Creating Limit Functions By The Pang-Zalcman Lemma}
\baselineskip 20pt 
\author [Shai Gul and Shahar Nevo]{Shai Gul and Shahar Nevo }
\begin{abstract}
In this paper we calculate the collection of limit functions obtained by applying an extension of Zalcman's Lemma, due to X.C Pang to the non-normal family  $\left\{f(nz):n\in\mathbb{N}\right\}$ in $\mathbb{C}$, where $f=Re^P$. Here $R$ and $P$ are an arbitrary rational function and a polynomial, respectively, where $P$ is a non-constant polnomial.
\end{abstract}
\subjclass[2000]{ 30D20, 30D30, 30D45.}
\keywords {Normal family, Zalcman's Lemma, Spherical metric.}
\thanks{The authors recieved support from the Israel Science Foundation, Grant No. 395/07. This research is part of the European Science Networking Programme HCAA}

\maketitle \tableofcontents

\section {Introduction}

\setcounter{equation}{0} \numberwithin{equation}{section} A
well-known powerful tool in the theory of normal families is the
following lemma of L. Zalcman.
\begin{thm1*} \cite {Za1}
A family $\mathcal{F}$ of functions meromorphic (resp., analytic) on the unit disk $\Delta$ is not normal if and only if there exist \newline
(a) a number $0<r<1$;\newline
(b) points $z_n$, $\left |z_n \right |<r$;\newline
(c) functions $f_n\in \mathcal  F$; and \newline
(d) numbers $\rho_n\rightarrow0^+$, \newline
such that
\begin{align*}
 f_n(z_n+\rho_n\zeta)\mathop\Rightarrow\limits^\chi g(\zeta)\quad (f_n(z_n+\rho_n\zeta)\Rightarrow g(\zeta)) \ ,
\end{align*}
where $g$ is a nonconstant meromorphic (entire) function on $\mathbb {C}$.

Morever, $g$ can be taken to satisfy the normalization \newline $g^\#(\zeta)\leq g^\#(0)=1$, $\zeta\in \mathbb{C}$.
\end {thm1*}
Here and throughout the paper, `$\mathop  \Rightarrow \limits^\chi$' (`$\mathop\Rightarrow$')  means local uniform convergence in $\mathbb{C}$ with respect to the spherical metric (Euclidian metric) of a sequence of meromorphic (holomorphic) functions.

This lemma was generalized by X.C pang as follows.
\begin {thm2*} (\cite[Lemma 2]{Pa1},\cite [Theorem 1] {Pa2})

Given a family $\mathcal F$ of functions meromorphic on the unit disk $\Delta$ which is not normal, then for every $-1<\alpha<1$, there exist \newline
(a) a number $0<r<1$;\newline
(b) points $z_n$, $\left|z_n\right|<r$ for every $n$;\newline
(c) functions $f_n\in \mathcal  F$; and \newline
(d) positive numbers $\rho_n\rightarrow 0^+$, \newline
such that
\begin{align*}
\frac{{f_n (z_n  + \rho _n \zeta )}}{{\rho _n^\alpha  }}\mathop  \Rightarrow \limits^\chi  g(\zeta ) \ ,
\end{align*}
where $g$ is a non-constant function in $\mathbb{C}$.
Morever, $g$ can be taken to satisfy the normalization
$g^\#  \left( \zeta  \right) \le g^\#  \left( 0 \right) = 1$, $\zeta\in\mathbb{C}$.
\end {thm2*}
The case $\alpha=0$ gives Zalcman's Lemma.
These two lemmas have a local version that can be written uniformaly as:
\begin {thm3*} \rm{(LPZ Lemma)} cf. \cite[Lemma 1.5]{Sc}, \cite[Lemma 4.1]{Ne}.

A family $\mathcal  F$ of functions meromorphic in a domain $D\subset\mathbb {C}$ is not normal at $z_0\in D$ if and only if for every $-1<\alpha<1$ there exist\newline
a) points $\left\{z_n\right\}_{n=1}^\infty$, $z_n\rightarrow z_0$;\newline
b) functoins $\left\{f_n\right\}_{n=1}^\infty \in \mathcal F $; \newline
c) positive numbers $\rho_n \rightarrow 0^+$;\newline
such that
\begin {equation}
\rho_n^{-\alpha}f_n(z_n+\rho_n\zeta)\mathop  \Rightarrow \limits^\chi  g(\zeta ) \ ,
\label{eq:1.1}
\end {equation}
where $g$ is a nonconstant meromorphic function on $\mathbb{C}$, such that for every $\zeta\in \mathbb{C}$,
\begin {equation}
g^\#(\zeta)\leq g^\#(0)=1 \ .
 \label {eq:1.1.5}
 \end{equation}
\end{thm3*}
The Pang-Zalcman Lemma and the LPZ Lemma also have extensions in case where we know that the multiplicities of the zeros (or of the poles) of members of the family of functions $\mathcal F$ are large enough (see \cite[Lemma 2]{PZ},\cite[Lemma 3.2]{LN}). In this paper we shall not deal with these extensions, although our particular results are valid also for these extensions.

For a nonconstant function $\mathcal F$ meromorphic on $\mathbb{C}$, let $\mathcal  F(f)$ be the non-normal family in $\mathbb{C}$
 \begin{align*}
 \mathcal  F(f)=\left\{f(nz):n \in \mathbb{N}\right\}.
 \end{align*}
 Normality properties of the family $\mathcal F(f)$ has already been studied from various directions.
 Montel \cite[PP. 158-176]{Mo} was probably the first to deal with this topic. This subject was also studied in \cite{Ne2}, \cite{Ne3} and \cite{GN}.

  The family $\mathcal  F(f)$ is not normal in $\mathbb{C}$, and specifically is never normal at $z=0$.
Given a point $z_0$ where $\mathcal  F(f)$ is not normal and $-1<\alpha<1$, then LPZ Lemma guarantees the existence of at least one function $g(\zeta)$, not constant and meromorphic on $\mathbb{C}$ that is obtained by the convergence process (\ref{eq:1.1}) described in this lemma.
For a certain $-1< \alpha <1$, let $\Pi _\alpha(f)$ denote the collection of \textbf{all} the non-constant limit meromorphic functions $g(\zeta)$ (on $\mathbb{C}$) that are created in the convergence process (\ref{eq:1.1}) (but not necessarily satisfies the normalization (\ref{eq:1.1.5})), considering all the points $z_0 \in \mathbb{C}$ of non-normality of $\mathcal  F(f)$. For such a function $g$, we have by the definition of $\mathcal  F(f)$ and by the LPZ Lemma a sequence $\left\{k_n\right\}_{n=1}^\infty$, $k_n\in\mathbb{N}$, $k_n\rightarrow \infty$, points $z_n\rightarrow z_0$ and positive numbers $\rho_n\rightarrow 0^+$ such that
\begin{equation}
f_{n,\alpha}(\zeta):=\frac{f(k_nz_n+k_n\rho_n\zeta)}{\rho_n^\alpha}\mathop\Rightarrow\limits^\chi g(\zeta) \ .
\label {eq:1.2}
\end {equation}
\medskip

\textbf{Our main goal} in this paper is to calculate, for every $-1<\alpha<1$, the collection $\Pi _\alpha{(f)}$ for the function
\begin {equation}
f(z)=R(z)e^{P(z)},
\label{eq:1.3}
\end{equation}
where $R(z)\not\equiv 0$ is a general rational function and $P(z)$ is a nonconstant polynomial.

Before we state our result we establish some notation: If $z_0$ is a zero (pole) of order $k$ of a nonconstant meromorphic function $f(z)$, then $\tilde{f}_{z_0}(z):=\frac{f(z)}{(z-z_0)^k}$ ($\hat{f}_{z_0}(z):=f(z)(z-z_0)^k$).
Also for $z_0\in\mathbb{C}$ and $r>0$, $\Delta(z_0,r):=\left\{|z-z_0|<r\right\}$, $\overline{\Delta}(z_0,r):=\left\{\left|z-z_0\right|\leq r \right\}$, and for $\theta \in \mathbb{R}$, $R_\theta$ denotes the ray from the origin with argument $\theta$.

Now we state our main theorem.
(The formulation is not short, as the proof is fairly involved.)
\begin{thm4*}
Let $f(z)=R(z)e^{P(z)}$ be as in (\ref{eq:1.3}), where \newline $P(z)=a_k(z-\alpha_1)...(z-\alpha_k)$ (the $\alpha_i$'s may occur with repititions), $a_k\neq 0$; $R(z)=\frac{P_1(z)}{P_2(z)}$ where $P_1(z)=(z-\gamma_1)^{l_1}...(z-\gamma_m)^{l_m}$, $P_2(z)=(z-\beta_1)^{j_1}...(z-\beta_l)^{j_l}$. We assume that $\gamma_1, \cdots ,\gamma_m$; $\beta_1,\cdots,\beta_l$ are all distinct. Let $L_1:=\left|P_1\right|=l_1+...+l_m$, $L_2:=\left|P_2\right|=j_1+...+j_l$.
Then for the various values of $-1<\alpha<1$, $\Pi _\alpha{(f)}$ is given as follows:

    \noindent\textbf{I.} $\boldsymbol{k=\left|P\right|=1}$\newline
    If\quad $\boldsymbol{\alpha=0}$, then
    \vspace {-5 pt}
    \begin{align*}
    {\Pi }_0{(f)}=\left\{k_0e^{A_1\zeta}:k_0\neq 0,\arg{A_1}=\arg{a_1}\right\}\bigcup \\
      \left\{f(C_1+C_2\zeta):C_1\in\mathbb{C},C_2>0\right\}.
    \end{align*}
     \noindent If \quad $\boldsymbol{0<\alpha<1}$, then
    \begin{align*}
\lefteqn{{\Pi }_\alpha(f)=\left\{k_0e^{A_1\zeta}:k_0\neq 0,\arg{A_1}=\arg{a_1}\right\}\bigcup }\\
&\left\{e^{P(\gamma_i)}\tilde{R}_{\gamma_i}(\gamma_i)(A_1\zeta+A_0)^{l_i}:1\leq i \leq m, A_0\in \mathbb{C},A_1>0 \right\}.
    \end{align*}
     \noindent If \quad $\boldsymbol{-1<\alpha<0}$, then
    \begin{align*}
\lefteqn{{\Pi }_\alpha(f)=\left\{k_0e^{A_1\zeta}:k_0\neq 0,\arg{A_1}=\arg{a_1}\right\}\bigcup} \\
& \left\{e^{P(\beta_i)}\hat{R}_{\beta_i}(\beta_i)(A_1\zeta+A_0)^{-j_i}:1\leq i \leq l, A_0\in\mathbb{C}, \ A_1>0\right\}.
    \end{align*}
    \vskip 10 pt
    \noindent\textbf{II.} $\boldsymbol{k \geq 2}$
 If \quad $\boldsymbol{\alpha=0}$, then
    \begin{multline*}
{\Pi }_0{(f)}=\left\{f(C_1+C_2\zeta):C_1\in\mathbb{C},C_2>0 \right\}\bigcup\\
\Big[\bigcup_{l=0}^{k-1}\left\{e^{A_1\zeta+A_0}:A_0\in\mathbb{C},\arg{A_1}=\left( \pm\frac{\pi}{2}(k-1)+\arg{a_k}+(k-1)2\pi l\right)/k \right\}\Big].
\end{multline*}
  If \quad $\boldsymbol{0<\alpha<1}$, then\newline
  for \quad $k=2$
 \begin{multline*}
 {\Pi }_\alpha{(f)}=\Big[\bigcup_{i=1}^m\left\{e^{P(\gamma_i)}A(\zeta+C)^{l_i}:\arg{A}=\arg{\tilde{R}_{\gamma_i}(\gamma_i)},C\in\mathbb{C}\right\}\Big]\bigcup\\
\left\{e^{A_0+A_1\zeta}:A_0\in\mathbb{C},\frac{\pi}{4}+\frac{\arg{a_2}}{2}\leq \arg{A_1} \leq \frac{3\pi}{4}+\frac{\arg{a_2}}{2} \quad \text{or} \quad \right.\\
\left.\frac{5\pi}{4}+\frac{a_2}{2}\leq \arg{A_1} \leq \frac{7 \pi}{4}+\frac{\arg{a_2}}{2}\right\} \ .\\
\end{multline*}
For \quad $ k\geq 3$
\begin{align*}
{\Pi }_\alpha(f)=\Big[\bigcup_{i=1}^{m}\left\{e^{P(\gamma_i)}A(\zeta+C)^{l_i}:\arg{A}=\arg{\tilde{R_{\gamma_i}}}(\gamma_i), C\in\mathbb{C}\right\}\Big]\bigcup
\end{align*}
\begin{align*}
\left\{e^{A_1\zeta+A_0}:A_0\in \mathbb{C},A_1\neq 0 \right\} \ .
\end{align*}
If \quad$\boldsymbol{-1<\alpha<0}$, then \newline
for\quad $k=2$
\begin{align*}
{\Pi }_\alpha(f)=\Big[\bigcup_{i=1}^l\left\{e^{P(\beta_i)}A(\zeta+C)^{-j_i}:\arg{A}=\arg{\hat{R}_{\beta_i}(\beta_i)},C\in \mathbb{C}\right\}\Big]\bigcup
\end{align*}
\begin{align*}
\left\{e^{A_0+A_1\zeta}:A_0\in\mathbb{C},-\frac{\pi}{4}+\frac{\arg{a_2}}{2}\leq \arg{A_1} \leq \frac{\pi}{4}+\frac{\arg{a_2}}{2}\right.
\end{align*}
 \begin{align*}
  \quad \text{or} \quad \left.\frac{3\pi}{4}+\frac{\arg{a_2}}{2}\leq \arg{A_1} \leq \frac{5\pi}{4}+\frac{\arg{a_2}}{2}\right\} \ .
\end{align*}
For \quad $k\geq 3$
\begin{align*}
{\Pi }_\alpha(f)=\Big[\bigcup_{i=1}^l\left\{e^{P(\beta_i)}A(\zeta+C)^{-j_i}: \arg{A}=\arg{\hat{R}_i(\beta_i)},C\in\mathbb{C}\right\}\Big]\bigcup
\end{align*}
\begin{align*}
\left\{e^{A_0+A_1\zeta}:A_0\in\mathbb{C},A_1\neq 0\right\} \ .
\end{align*}
\end{thm4*}
Observe that in each of the three intervals $\alpha=0$, $0<\alpha<1$ and $-1<\alpha<0$, $\Pi _\alpha{(f)}$ is independent of $\alpha$.

The proof of Theorem 1 is similar to climbing a ladder with four steps where each step is more complicated then the former step. In the first step we calculate $\Pi _\alpha{(M)}$ for a general monome, $M(z)=(z-\alpha)^k$. In the second step we find $\Pi _\alpha{(P)}$ where $P$ is a general nonconstant  polynomial. In step 3 we calculate $\Pi _\alpha{(R)}$, where $R$ is a general nonconstant rational function, and finally in the fourth step we find $\Pi _\alpha{(Re^P)}$.
In each step we rely on the results of the previous steps. The first three steps is the contents of section 2, the proof of Theorem 1 is actually the fourth step which we prove in section 3.
We note that for a nonconstant rational function, $z_0=0$ is the only point of non-normality in $\mathbb{C}$, and this is the situation in the first three steps. For $f=Re^P$, the points of non-normality lies on few rays through the origin, as we will see in the sequel. Throughout the proof we often deal with the connections between $\left\{z_n\right\}$ and $\left\{\rho_n\right\}$ in the LPZ Lemma. We hope this will contribute to the better understanding of the potential of this somewhat obscure lemma.
As it is always possible to move to convergent subsequences (in the extended sense), we shall always assume without loss of generality that the sequences $\left\{k_nz_n\right\}$, $\left\{k_n\rho_n\right\}$ from (\ref{eq:1.2}) converge (in the extended sense). This assumption also applies to other sequences of complex numbers involved in our calculations.

The importance of this paper, beyond the result obtained in Theorem~1, lies in the technique that we used. The possible connections between $z_n$ and $\rho_n$ in (\ref{eq:1.1}) were used to deduce the limit function $g$. We note that the Pang-Zalcman Lemma is a common tool to establish normality of families of meromorphic functions. However, the proof of this lemma does not give an explicit relation between $z_n$ to $\rho_n$, because some unknown parameter is involved in this relation (see \cite[Lemma 2]{Pa1}, \cite[Theorem 1]{Pa2}).
Hence, in general there is some difficulty in determing the limit function $g$. We expect that the detailed calculation that given here will contribute and promote the study of this subject.
\section {Calculating $\Pi _\alpha{(M)}$, $\Pi _\alpha{(P)}$ and $\Pi _\alpha{(R)}$ }
\subsection{First step: Calculating $\Pi _\alpha{(M)}$ where $M(z)=(z-\beta)^k$.}
    Let $-1<\alpha<1$ and assume that $M_{n,\alpha}(\zeta)\Rightarrow g(\zeta)$, (where $g$ is a non-constant entire function). This means that
    \begin {equation}
    (k_n\rho_n^{1-\frac{\alpha}{k}}\zeta+\frac{k_nz_n-\beta}{\rho_n^{\frac{\alpha}{k}}})^k\Rightarrow g(\zeta) \ .
    \label{eq:2.1}
    \end{equation}
    The left hand side of (\ref{eq:2.1}) has a single zero of multiplicity $k$ in $\mathbb{C}$, and thus, it follows by Rouch\'e's Theorem that $g(\zeta)$ is also a monome of degree $k$.
    There must be $0<A<\infty$ and $C\in\mathbb{C}$, such that $k_n\rho_n^{1-\frac{\alpha}{k}}\rightarrow A$ and $\frac{k_nz_n-\beta}{\rho_n^{\frac{\alpha}{k}}} \rightarrow C$ and so $g(\zeta)=(A\zeta+C)^k$.
    Conversely, given $A>0$ and $C \in\mathbb{C}$, we set
    \begin{equation}
    k_n=n,\quad \rho_n=(\frac{A}{n})^{\frac{k}{k-\alpha}},\quad z_n=\frac{A^{\frac{\alpha}{k-\alpha}}C+\beta n^{\frac{\alpha}{k-\alpha}}}{n^{1+\frac{\alpha}{k-\alpha}}}
    \label{eq:2.1.5}
    \end{equation}
    to get (for every $n$) $M_n(\zeta)=(A\zeta+C)^k$. Thus, for every $-1<\alpha<1$
    \begin{equation}
    {\Pi }_\alpha{(M)}=\left\{(A\zeta+C)^k: A>0, C\in \mathbb{C}\right\} \ .
    \label {eq:2.1.75}
    \end{equation}
    \newpage
\subsection{Second step: Calculating $\Pi _\alpha(P)$ for a nonconstant polynomial $P(z)$}
Let $P(z)=L(z-\gamma_1)^{l_1}...(z-\gamma_m)^{l_m}$, $\gamma_i\neq \gamma_j$, $i \neq j$, $k:=l_1+l_2...+l_m$.
Assume first that $\alpha=0$ and that
\begin{equation}
P_{n,0}(\zeta)=P(k_n\rho_n\zeta+k_nz_n)\Rightarrow g(\zeta) \ .
\label{eq:2.2}
\end{equation}
By substituting $\zeta=0$ in (\ref{eq:2.2}), we get that $\left\{k_nz_n\right\}$ is bounded and thus $k_nz_n \rightarrow C\in \mathbb{C}$ (recall that we always assume without loss of generality that $\left\{k_nz_n\right\}$, $\left\{k_n\rho_n\right\}$, etc. converge). Now, if $k_n \rho_n \rightarrow 0$ then $g$ is constant and in case that $k_n \rho_n \rightarrow \infty$ then $g(\zeta)=\infty$ for every $\zeta \neq 0$. Hence $k_n\rho_n \rightarrow A$, $0<A<\infty$ and we have $g(\zeta)=P(A \zeta+C)$.

On the other hand, given $0<A<\infty$ and $C \in \mathbb{C}$, the trivial setting $k_n=n$, $\rho_n=\frac{A}{n}$, $z_n=\frac{C}{n}$ gives $P_{n,0}(\zeta)=P(A \zeta+C)$ and we get
\begin{equation}
{\Pi }_0(P)=\left\{P(A\zeta+C): A>0, C \in \mathbb{C} \right\} .
\label{eq:2.2.5}
\end{equation}
Consider now the case where $0<\alpha<1$. Here $P_{n,\alpha}(\zeta) \Rightarrow g(\zeta)$ means
\begin{equation}
\frac{L(k_n\rho_n\zeta+k_nz_n-\gamma_1)^{l_1}...(k_n\rho_n\zeta+k_nz_n-\gamma_m)^{l_m}}{\rho_n^\alpha} \Rightarrow g(\zeta)\ .
\label{eq:2.3}
\end{equation}
Because of $\rho_n^\alpha \rightarrow 0$, then by substituting $\zeta=0$ in (\ref{eq:2.3}), we get that there exists $1\leq i \leq m $  such that $k_nz_n \rightarrow \gamma_i$, since otherwise $P_{n,\alpha}(0) \rightarrow \infty$, and this would be a contradiction.

Without loss of generality, we assume that $i=1$.
\begin{claim}
$k_n\rho_n \rightarrow 0$.
\end {claim}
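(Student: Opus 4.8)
The plan is to rule out both $k_n\rho_n\to A$ with $0<A<\infty$ and $k_n\rho_n\to\infty$, leaving only $k_n\rho_n\to 0$. Recall that $\{k_n\rho_n\}$ converges in the extended sense by our standing assumption, that $k_nz_n\to\gamma_1$ by the remark preceding the claim, and that $\rho_n\to 0^+$ by the LPZ Lemma, so $\rho_n^\alpha\to 0$ since $\alpha>0$. In each excluded case I would argue the same way: show that the numerator $Q_n(\zeta)$ of $P_{n,\alpha}$ stays away from $0$ on a dense open set while the denominator $\rho_n^\alpha\to 0$, forcing $|P_{n,\alpha}|\to\infty$ there and contradicting local uniform (Euclidean) convergence to the finite entire function $g$.

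Suppose first $k_n\rho_n\to A$ with $0<A<\infty$. Since $k_nz_n\to\gamma_1$, each factor satisfies $k_n\rho_n\zeta+k_nz_n-\gamma_j\to A\zeta+\gamma_1-\gamma_j$ locally uniformly, so $Q_n$ converges locally uniformly to
\begin{equation*}
Q(\zeta)=L(A\zeta)^{l_1}\prod_{j=2}^{m}\left(A\zeta+\gamma_1-\gamma_j\right)^{l_j},
\end{equation*}
which is not identically zero (as $L\neq 0$, $A>0$) and hence has finitely many zeros. At any $\zeta_0$ with $Q(\zeta_0)\neq 0$ one gets $|P_{n,\alpha}(\zeta_0)|=|Q_n(\zeta_0)|/\rho_n^\alpha\to\infty$, contradicting $P_{n,\alpha}(\zeta_0)\to g(\zeta_0)\in\mathbb{C}$.

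If instead $k_n\rho_n\to\infty$, then for fixed $\zeta\neq 0$ the argument $w_n(\zeta):=k_n\rho_n\zeta+k_nz_n$ satisfies $|w_n(\zeta)|\geq|k_n\rho_n\zeta|-|k_nz_n|\to\infty$, since $k_nz_n\to\gamma_1$ is bounded; as $P$ is nonconstant, $|P(w_n(\zeta))|\to\infty$, and dividing by $\rho_n^\alpha\to 0$ again gives $|P_{n,\alpha}(\zeta)|\to\infty$, a contradiction. Hence $k_n\rho_n\to 0$. The calculation is routine; the only delicate points are checking that the limiting numerator $Q$ is not the zero polynomial (so that the blow-up is not cancelled) and noting that $k_nz_n\to\gamma_1$ keeps the constant part of the argument bounded, so that in the case $A=\infty$ the divergence is genuinely driven by the $k_n\rho_n\zeta$ term. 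The one genuinely conceptual point, rather than an obstacle, is that this is precisely the step exploiting that $g$ is a finite-valued entire function and not merely a meromorphic limit; that finiteness is exactly what the two excluded scenarios violate.
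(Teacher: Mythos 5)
Your proof is correct and takes essentially the same approach as the paper: both rule out $k_n\rho_n\to\infty$ and $k_n\rho_n\to A\in(0,\infty)$ by exhibiting points where the numerator stays bounded away from zero while the denominator $\rho_n^\alpha\to 0$, so that $|P_{n,\alpha}|\to\infty$ there, contradicting Euclidean local uniform convergence to the finite entire function $g$. The only cosmetic difference is in the case $0<A<\infty$: you pass to the limit polynomial $Q$ and evaluate at a point where $Q\neq 0$, whereas the paper bounds each factor below by $1$ for $|\zeta|$ large; the substance is identical.
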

\begin{proof}
Indeed, if $k_n\rho_n \rightarrow \infty$, then for every $\zeta \neq 0$, $P_{n,\alpha}(\zeta) \rightarrow \infty$, a contradiction.

If $k_n\rho_n \rightarrow A$, $0<A<\infty$, then there are some $R>0$ and $N_0 \in \mathbb{N}$ such that for every $\zeta$, $\left| \zeta \right|>R$, $n>N_0$ and
$1\leq i\leq m$, $\left|k_n\rho_n\zeta+k_nz_n-\gamma_i\right|\geq 1$ and thus $P_{n,\alpha}(\zeta) \rightarrow \infty$, a contradiction and the claim is proved.
\end{proof}
We then get from (\ref{eq:2.3}) that
\begin{align*}
L \frac{(k_nz_n-\gamma_1+k_n\rho_n\zeta)^{l_1}}{\rho_n^\alpha}(\gamma_1-\gamma_2)^{l_2}(\gamma_1-\gamma_3)^{l_3}...(\gamma_1-\gamma_m)^{l_m} \Rightarrow g(\zeta) \  .
\end{align*}
From the result in section 2.1 we then get that $g(\zeta)=\tilde{P}_{\gamma_1}(\gamma_1)(A\zeta+C)^{l_1}$ where $A>0$ and $C \in \mathbb{C}$.

Conversely, given $A>0$ and $C\in\mathbb{C}$, an analogous setting to (\ref{eq:2.1.5})
\begin{center}
 $k_n=n$, $\rho_n=(\frac{A}{n})^{\frac{l_1}{l_1-\alpha}}$, $z_n=\frac{A^{\frac{\alpha}{l_1-\alpha}}C+\gamma_1 n^{\frac{\alpha}{l_1-\alpha}}}{n^{1+\frac{\alpha}{\l_1-\alpha}}}$
 \end{center}
  gives
 \begin{align*}
 P_{n,\alpha}(\zeta) \Rightarrow \tilde{P_{\gamma_1}}(\gamma_1)(A\zeta+C)^{l_1} \ .
 \end{align*}
Observe that since $0<\alpha<1$, indeed $n\rho_n \rightarrow 0$. Running over all the roots $\gamma_i$, $1\leq i\leq m$, of $P(z)$ we get that
\begin{equation}
{\Pi }_\alpha(P)=\left\{\tilde{P}_{\gamma_i}(\gamma_i)(A\zeta+C)^{l_i}: A>0,\ C \in \mathbb{C},\ 1 \leq i \leq m \right\} \ .
\label {eq:2.3.5}
\end{equation}
We turn now to the case $-1<\alpha<0$.
Suppose that
\begin{equation}
P_{n,\alpha}(\zeta) \Rightarrow g(\zeta) \ .
\label{eq:2.4}
\end{equation}
\begin{claim}{\label{Claim 1}}
$k_n\rho_n \rightarrow \infty$.
\end{claim}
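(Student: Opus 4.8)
The plan is to argue by contradiction and rule out the two remaining possibilities for the extended-sense limit of $k_n\rho_n$, namely $k_n\rho_n \to 0$ and $k_n\rho_n \to A$ with $0 < A < \infty$. Both are handled at once by assuming $k_n\rho_n \to A$ for some $0 \le A < \infty$ and deriving that $g$ must be constant, contradicting the fact that $g$ is nonconstant. The one structural fact I will lean on throughout is that, since $-1 < \alpha < 0$ and $\rho_n \to 0^+$, the normalizing factor blows up: $\rho_n^\alpha \to \infty$. This is the exact reversal of the $0<\alpha<1$ situation and is what forces the length scale $k_n\rho_n$ to be large rather than small.

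Assuming $k_n\rho_n \to A \in [0,\infty)$, I would split according to the (already-assumed convergent) behavior of $k_nz_n$. If $k_nz_n \to C' \in \mathbb{C}$, then $k_n\rho_n\zeta + k_nz_n \to A\zeta + C'$ locally uniformly, so the numerator $P(k_n\rho_n\zeta + k_nz_n) \to P(A\zeta+C')$ is locally bounded while the denominator $\rho_n^\alpha \to \infty$. Hence $P_{n,\alpha}(\zeta) \to 0$ locally uniformly, giving $g \equiv 0$, a contradiction.

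The second and more delicate case is $k_nz_n \to \infty$, where numerator and denominator both blow up and I must show that the $\zeta$-dependence washes out. Writing $w_n(\zeta) = k_nz_n\bigl(1 + (\rho_n/z_n)\zeta\bigr)$ and using $\rho_n/z_n = (k_n\rho_n)/(k_nz_n) \to 0$, the factor $\bigl(1 + (\rho_n/z_n)\zeta\bigr)^k \to 1$ locally uniformly and $w_n(\zeta) \to \infty$ locally uniformly; feeding this into $P(w) = Lw^k\bigl(1 + O(1/w)\bigr)$ yields $P_{n,\alpha}(\zeta) = \frac{L(k_nz_n)^k}{\rho_n^\alpha}\bigl(1 + (\rho_n/z_n)\zeta\bigr)^k(1+o(1))$, whose limit is independent of $\zeta$. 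So again $g$ is constant, a contradiction, and therefore $k_n\rho_n \to \infty$.

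I expect the main obstacle to be exactly this second case: ensuring that the estimates $\bigl(1 + (\rho_n/z_n)\zeta\bigr)^k \to 1$ and $P(w)/w^k \to L$ hold \emph{uniformly} on compact $\zeta$-sets, which they do precisely because $w_n(\zeta) \to \infty$ uniformly there. Only with this uniformity is the surviving limit $\lim_n L(k_nz_n)^k/\rho_n^\alpha$ genuinely constant in $\zeta$, so that it contradicts the nonconstancy of $g$ guaranteed by the LPZ Lemma.
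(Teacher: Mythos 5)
Your proposal is correct and follows essentially the same route as the paper: both handle $k_n\rho_n\to A\in[0,\infty)$ by splitting on whether $k_nz_n$ converges in $\mathbb{C}$ (where $\rho_n^\alpha\to\infty$ forces $g\equiv 0$) or $k_nz_n\to\infty$ (where the factorization $P(k_nz_n+k_n\rho_n\zeta)=L(k_nz_n)^k\,T_n(\zeta)$ with $T_n\Rightarrow 1$ locally uniformly forces $g$ to be constant). Your write-up via $w_n(\zeta)=k_nz_n(1+(\rho_n/z_n)\zeta)$ and $P(w)=Lw^k(1+O(1/w))$ is just a repackaging of the paper's product $T_n(\zeta)=\prod_i\bigl[1+\frac{k_n\rho_n\zeta-\gamma_i}{k_nz_n}\bigr]^{l_i}$, including the same attention to local uniformity.
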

\begin{proof}
If to the contrary, $k_n \rho_n \rightarrow A$, $A<\infty$ and $k_nz_n \rightarrow C\in \mathbb{C}$, then $P_{n,\alpha}(\zeta) \rightarrow 0$ for every $\zeta \in \mathbb{C}$ and this is of course a contradiction. If $k_n \rho_n \rightarrow A<\infty$ and $k_nz_n \rightarrow \infty$ then (\ref{eq:2.4}) gives
\begin{equation}
L \frac{{(k_nz_n)}^k}{\rho_n^\alpha}\underbrace{\left[1+\frac{k_n\rho_n\zeta-\gamma_1}{k_nz_n}\right]^{l_1}...\left[1+\frac{k_n\rho_n\zeta-\gamma_m}{k_nz_n}\right]^{l_m}}_{T_n(\zeta)} \Rightarrow g(\zeta) \ .
\label{eq:2.4.5}
\end{equation}
Since
\begin{align*}
T_n(\zeta) \Rightarrow 1 \ ,
\end{align*}
we get that $L \frac{{(k_nz_n)}^k}{\rho_n^\alpha} \Rightarrow g(\zeta)$ and we get that $g$ is a constant, a contradiction.
\end{proof}
\begin{claim}{\label{Claim 2}}
$\frac{z_n}{\rho_n} \rightarrow B \in \mathbb{C}$ (equivalenty, for every $1 \leq i \leq m$, $\frac{k_nz_n-\gamma_i}{k_n\rho_n} \rightarrow B )$.
\end{claim}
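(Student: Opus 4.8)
The plan is to argue by contradiction, exploiting a factorization of $P_{n,\alpha}$ into a $\zeta$-independent prefactor times a factor that degenerates to the constant $1$. First I would record the consequence of the previous claim that $k_n\rho_n\to\infty$: for each $i$ one has $\frac{k_nz_n-\gamma_i}{k_n\rho_n}=\frac{z_n}{\rho_n}-\frac{\gamma_i}{k_n\rho_n}$, and the second term tends to $0$; hence all the quantities $\frac{k_nz_n-\gamma_i}{k_n\rho_n}$ ($1\le i\le m$) and $s_n:=\frac{z_n}{\rho_n}$ share the same limit in the extended sense, so the two formulations of the claim are equivalent and it suffices to treat $s_n$. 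By the standing assumption we may pass to a subsequence along which $s_n$ converges in the extended sense, and the goal becomes to exclude the possibility $s_n\to\infty$.

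Writing $c_{n,i}:=\frac{k_nz_n-\gamma_i}{k_n\rho_n}$ and factoring $k_n\rho_n$ out of every linear factor, I obtain
\begin{align*}
P_{n,\alpha}(\zeta)=\frac{L(k_n\rho_n)^k}{\rho_n^\alpha}\prod_{i=1}^m(\zeta+c_{n,i})^{l_i}.
\end{align*}
Assuming toward a contradiction that $s_n\to\infty$, every $c_{n,i}\to\infty$ as well (they differ from $s_n$ by terms tending to $0$), so pulling out $\prod_i c_{n,i}^{l_i}$ gives
\begin{align*}
P_{n,\alpha}(\zeta)=A_n\prod_{i=1}^m\Big(1+\tfrac{\zeta}{c_{n,i}}\Big)^{l_i},\qquad A_n:=\frac{L(k_n\rho_n)^k}{\rho_n^\alpha}\prod_{i=1}^m c_{n,i}^{l_i},
\end{align*}
where the prefactor $A_n$ does not depend on $\zeta$.

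Since $c_{n,i}\to\infty$, on each compact set $\frac{\zeta}{c_{n,i}}\to0$ uniformly, and therefore $\prod_{i=1}^m(1+\zeta/c_{n,i})^{l_i}\Rightarrow1$ locally uniformly. After passing, if necessary, to a further subsequence along which $A_n$ converges in the extended sense to some $a\in\mathbb{C}\cup\{\infty\}$, I would split into two cases. If $a\in\mathbb{C}$, then $P_{n,\alpha}(\zeta)\Rightarrow a$ locally uniformly, so $g\equiv a$; if $a=\infty$, then for each fixed $\zeta$ we get $P_{n,\alpha}(\zeta)\to\infty$, so $g\equiv\infty$. In both cases $g$ is constant, contradicting that $g$ is a nonconstant limit function. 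Hence $s_n\to\infty$ cannot occur, and consequently $s_n=\frac{z_n}{\rho_n}\to B$ for some $B\in\mathbb{C}$, which is exactly the assertion of the claim.

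The only delicate point is that the convergence furnished by the LPZ Lemma is with respect to the spherical metric, so a priori the prefactor $A_n$ may tend to $0$ or to $\infty$ rather than to a finite nonzero limit. This is precisely why I first isolate the $\zeta$-independent factor $A_n$ and then observe that the residual product tends to $1$ uniformly on compacta: the limit of $A_n$ (read off, say, at $\zeta=0$, where $P_{n,\alpha}(0)=A_n$) must then carry all of the limiting behaviour, so no genuine $\zeta$-dependence can survive in $g$. Beyond this case-bookkeeping I do not anticipate any real obstacle.
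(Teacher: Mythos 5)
Your proof is correct and follows essentially the same route as the paper: assume $\frac{z_n}{\rho_n}\to\infty$, factor $P_{n,\alpha}$ into the $\zeta$-independent prefactor $\frac{L}{\rho_n^\alpha}\prod_{i=1}^m(k_nz_n-\gamma_i)^{l_i}$ (your $A_n$) times a product $S_n(\zeta)$ that converges locally uniformly to $1$, and conclude that $g$ would be constant (finite or $\equiv\infty$), a contradiction. Your explicit treatment of the extended-sense limit of $A_n$ and of the equivalence of the two formulations via Claim 2.2 merely spells out what the paper compresses into ``as in Claim \ref{Claim 1}''.
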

\begin{proof}
If this were not the case, then for every $1 \leq i \leq m$, $\frac{k_n\rho_n}{k_nz_n-\gamma_i} \rightarrow 0$, and then
\begin{align*}
 P_{n,\alpha}(\zeta)=\frac{L}{\rho_n^\alpha}\left[\prod  \limits_{i=1}^m(k_nz_n-\gamma_i)^{l_i}\right]\cdot \underbrace{\left[1+\frac{k_n\rho_n}{k_nz_n-\gamma_1}\zeta\right]^{l_1}\cdots\left[1+\frac{k_n\rho_n}{k_nz_n-\gamma_m}\zeta\right]^{l_m}}_{S_n(\zeta)}
 \end{align*}
 \begin{align*}
  \Rightarrow g(\zeta) \  .
 \end{align*}
Here also $S_n(\zeta) \Rightarrow 1$ and as in Claim \ref{Claim 1}, we get a contradiction and Claim \ref{Claim 2} is proven.
\end{proof}
We can write (\ref{eq:2.4}) as
\begin {equation}
L \frac{(k_n\rho_n)^k}{(\rho_n^\alpha)}\underbrace{\left[\zeta+\frac{k_nz_n-\gamma_1}{k_n\rho_n}\right]^{l_1}\cdots\left[\zeta+\frac{k_nz_n-\gamma_m}{k_n\rho_n}\right]^{l_m}}_{R_n(\zeta)}\Rightarrow g(\zeta) \ ,
\label{eq:2.5}
\end{equation}
and since $R_n(\zeta)\Rightarrow(\zeta+B)^k$, we have $\frac{(k_n\rho_n)}{{\rho_n}^{\alpha/k}} \rightarrow A$, $0<A<\infty$.
Thus $g(\zeta)=L(A \zeta+C)^k$, where $C=AB$.

Conversely, let $g(\zeta)=L(A \zeta+C)^k$ where $A>0$, $C\in \mathbb{C}$. We set $k_n=n$ and consider (\ref{eq:2.5}), we wish that $A=\frac{n\rho_n}{\rho_n^{\frac{\alpha}{k}}}$ and $\frac{z_n}{\rho_n}=\frac{C}{A}$. These requirements are fulfiled by the setting $\rho_n:=(\frac{A}{n})^{\frac{k}{k-\alpha}}$, $z_n:=\frac{C}{A}(\frac{A}{n})^{\frac{k}{k-\alpha}}$. Hence we get that for $-1< \alpha <0$
\begin{equation}
{\Pi }_\alpha{(P)}=\left\{L(A \zeta+C)^k: A>0,\ C\in \mathbb{C}\right\} \ .
\label{eq:2.5.5}
\end{equation}
\subsection {Third step: Calculating ${\Pi }_\alpha(R)$ for a rational function $R(z)$}
    \textbf{I.} We assume first that $R$ has at least one zero and one pole in $\mathbb{C}$. Denote
    \begin{equation}
    R(z)=L \frac{(z-\gamma_1)^{l_1}...(z-\gamma_m)^{l_m}}{(z-\beta_1)^{j_1}...(z-\beta_l)^{j_l}},\ k=l_1+...+l_m>0, \ j=j_1+...+j_l>0.
    \label{eq:2.5.75}
    \end{equation}
    We assume that for some $-1<\alpha<1$
    \begin{equation}
    R_{n,\alpha}(\zeta){\stackrel{\chi}{\Rightarrow}} g(\zeta) \ .
    \label{eq:2.6}
    \end{equation}
    Observe first that Picard's great theorem and Rouch\'e's Theorem imply that ${\Pi }_\alpha(R)$ contains only rational functions.
    We separate into subcases according to the value of $\alpha$.\newline
\vskip 10 pt
    \noindent\textbf{Case (A)}: $0<\alpha<1$.

    Let us assume first that $k_n\rho_n \rightarrow C$, $0<C<\infty$.
    In such case, if $k_nz_n \rightarrow \infty$, then as in (\ref{eq:2.4.5}) we deduce that $g$ is a constant, a contradiction. If there exists some $b \in \mathbb{C}$ such that $k_nz_n \rightarrow b$, then by (\ref{eq:2.6}) (observe that $\rho_n^\alpha \rightarrow 0 $) we get for every $0 \leq \theta <2 \pi$, except finitely many $\theta$'s, that $R_{n,\alpha}(\zeta)\rightarrow \infty$ for every $\zeta=re^{i\theta}$, $r>0$. This is a contradiction. \newline
    Secondly, we assume that $k_n\rho_n \rightarrow 0$. In such a situation if $k_nz_n \rightarrow \infty$ then $g(\zeta) \equiv d$ where $d$ is some finite constant or
     $d \equiv \infty$, a contradiction.
     If $k_nz_n \rightarrow \eta$, $\eta \in \mathbb{C}$, then if for every $i$, $j$ $\eta \neq \gamma_i, \beta_j$ then $g \equiv \infty$, a contradiction.

    If $\eta=\beta_{i_0}$ for some $j_0$, $1\leq j_0 \leq l$, then also by (\ref {eq:2.6}) $g\equiv \infty$, a contradiction.

    If $\eta=\gamma_{i_0}$ for some $1\leq i_0 \leq m$, then assume without loss of generality that $\eta=\gamma_1$. Then (\ref{eq:2.6}) can be written as
    \begin{align*}
     \frac{1}{\rho_n^\alpha}(k_nz_n-\gamma_1+k_n\rho_n\zeta)^{l_1}\tilde{R}_{\gamma_1}(k_nz_n+k_n\rho_n\zeta){\stackrel{\chi}{\Rightarrow}} g(\zeta) \ ,
     \end{align*}
     and since
     \begin{align*}
     \tilde{R}_{\gamma_1}(k_nz_n+k_n\rho_n\zeta){\stackrel{\chi}{\Rightarrow}} \tilde{R}_{\gamma_1}(\gamma_1) \ ,
     \end{align*}
we get by the case of a monome that
\begin {equation}
g(\zeta)=\tilde{R}_{\gamma_1}(\gamma_1)(A\zeta+C)^{l_1} \quad  A>0,\ C \in \mathbb{C} \ .
\label{eq:2.7}
\end {equation}
As in section 2.1, it can easily be shown that every function of the form (\ref{eq:2.7}) is in $\Pi _{\alpha}(R)$. Recall now that $C_0$ can be any value $1\leq i_0 \leq m$, and we get that the contribution to $\Pi _{\alpha}(R)$ from this possibility is
\begin{equation}
\left\{g(\zeta)=\tilde{R}_{\gamma_i}(\gamma_i)(A\zeta+C)^{l_i}: \quad C \in \mathbb{C}, \ A>0 ,\ 1 \leq i \leq m \right\}.
\label {eq:2.8}
\end{equation}
The last option in case (A) is that $k_n\rho_n \rightarrow \infty$. Similarly  to the case $k_n\rho_n \rightarrow 0$, we deduce that $\frac{z_n}{\rho_n}\rightarrow C$, $C \in \mathbb{C}$. (Recall that we can assume with no loss of generality that sequences as $\left\{\frac{z_n}{\rho_n}\right\}$ converges in the extended sense.)
We can write
\begin{align*}
R_{n,\alpha}(\zeta)=L\frac{(k_n\rho_n)^{l_1+\dots+l_m}(\zeta+\frac{k_nz_n-\gamma_1}{k_n\rho_n})^{l_1}\dots (\zeta+\frac{k_nz_n-\gamma_m}{k_n\rho_n})^{l_m}}{\rho_n^\alpha(k_n\rho_n)^{j_1+\dots+j_l}(\zeta+\frac{k_nz_n-\beta_1}{k_n\rho_n})^{j_1}\dots(\zeta+\frac{k_nz_n-\beta_l}{k_n\rho_n})^{j_l}}
\end{align*}
\begin{align*}
=L\frac{(k_n\rho_n)^{k}(\zeta+\frac{k_nz_n-\gamma_1}{k_n\rho_n})^{l_1}\dots (\zeta+\frac{k_nz_n-\gamma_m}{k_n\rho_n})^{l_m}}{\rho_n^\alpha(k_n\rho_n)^{j}(\zeta+\frac{k_nz_n-\beta_1}{k_n\rho_n})^{j_1}\dots(\zeta+\frac{k_nz_n-\beta_l}{k_n\rho_n})^{j_l}}
\end{align*}
\begin{align*}
= L\frac{(k_n\rho_n)^{k-j}(\zeta+\frac{k_nz_n-\gamma_1}{k_n\rho_n})^{l_1}\dots (\zeta+\frac{k_nz_n-\gamma_m}{k_n\rho_n})^{l_m}}{\rho_n^\alpha(\zeta+\frac{k_nz_n-\beta_1}{k_n\rho_n})^{j_1}\dots(\zeta+\frac{k_nz_n-\beta_l}{k_n\rho_n})^{j_l}} \ .
\end{align*}
Observe that for every $i$ and $j$, $\frac{k_nz_n-\gamma_i}{k_n\rho_n}, \frac{k_nz_n-\beta_j}{k_n\rho_n} \rightarrow C$.
Thus, if $k \geq j$ this is a contradiction, since the only candidate to be a limit function is $g\equiv \infty$.

If $k<j$, then $L_0:= \lim\frac{(k_n\rho_n)^{k-j}}{{\rho}_n^\alpha}$ must satisfy $L_0\neq 0, \infty$, since otherwise $g\equiv 0$ or $g \equiv \infty$, as the value of $L_0$. We deduce that $g(\zeta)=L\cdot L_0 (\zeta+C)^{k-j}$. But $R_{n,\alpha}(\zeta)$ vanishes at $\frac{k_nz_n-\gamma_1}{k_n\rho_n}, \dots,  \frac{k_nz_n-\gamma_m}{k_n\rho_n}$ and thus $g(-C)=0$, a contradiction. Hence the collection (\ref{eq:2.8}) is $\Pi _{\alpha}(R)$. \newline
\vskip 10 pt
\noindent\textbf{Case (B)}: $-1<\alpha<0$.

The calculation of $\Pi _{\alpha}(R)$ is immediate since $R_{n,\alpha}(\zeta){\stackrel{\chi}{\Rightarrow}} g(\zeta)$ in $\mathbb{C}$ if and only if $(\frac{1}{R})_{n,-\alpha}(\zeta){\stackrel{\chi}{\Rightarrow}} \frac{1}{g}(\zeta)$ in $\mathbb{C}$, and since $0<-\alpha<1$. Thus, by Case~(A), $\Pi _{\alpha}(R)=\left\{\hat{R}_{\beta_n}(\beta_n)((A\zeta+C)^{j_n})^{-1} : A>0,\ C \in \mathbb{C},\ 1 \leq n \leq l \right\}$.
\vskip 10 pt
\textbf{Case (C)}: $\alpha=0$.

Assume first that $k_n \rho_n \rightarrow 0$. Then if $k_nz_n \rightarrow \infty$ we deduce that $g \equiv c$, $c \in \mathbb{C}$, a contradiction.

If $k_nz_n \rightarrow b$, $b \in \mathbb{C}$, then in case $b \neq \gamma_i,\beta_j$ for every $i,j$ we get by (\ref{eq:2.6}) that $g$ is constant, a contradiction.

If $b=\gamma_{i_0}$, $1 \leq i_0 \leq m$, then $g \equiv 0$, a contradiction. If $b=\beta_{j_0}$, $1 \leq j_0 \leq l$ then $g \equiv \infty$, a contradiction.

The next possibility we examine is $k_n \rho_n \rightarrow \infty$. As in Case~(A) or Case~(B) we must have $\frac{z_n}{\rho_n} \rightarrow c \in \mathbb {C}$. Then we can write
\begin{align*}
 R_{n,0}(\zeta)=L(k_n \rho_n)^{k-j} \frac{(\zeta+\frac{k_nz_n-\gamma_1}{k_n \rho_n})^{l_1} \dots (\zeta+\frac{k_nz_n-\gamma_m}{k_n \rho_n})^{l_m}}{(\zeta+\frac{k_nz_n-\beta_1}{k_n \rho_n})^{j_1} \dots (\zeta+\frac{k_nz_n-\beta_l}{k_n \rho_n})^{j_l}} \ .
 \end{align*}
  In any of the cases $k=j$, $k>j$ or $k<j$, we get a contradiction. So it must be the case $k_n \rho_n \rightarrow c$, $0<c <\infty$. Then, if $k_nz_n \rightarrow \infty$ then similarly to the case $k_n \rho_n \rightarrow 0$, we get that $g$ is constant so $k_nz_n \rightarrow b$, $b \in \mathbb{C}$ and $g(\zeta)=R(b+c\zeta)$.

   Conversely, for every $b \in \mathbb{C}$, $c>0$, we can take $k_n=n$, $\rho_n=\frac{c}{n}$, $z_n=\frac{b}{n}$ to get $R_{n,0}(\zeta) \stackrel{\chi}\Rightarrow R(b+c \zeta)$ in $\mathbb{C}$, so $\Pi _0{(R)}=\left\{ R(b+c\zeta): b \in \mathbb{C}, c>0 \right\}$.
   \vskip 10 pt
\noindent\textbf{II.} Now we consider the case where $R(z)$ has only zeros or only poles. If $R(z)$ has only zeros, then $R$ is a polynomial  and this case was discussed in section 2.2. If $R(z)$ has only poles then $R=\frac{1}{P}$ where $P$ is a polynomial, and we can use the same principle as in Case (B) of (I) of the present subsection, and then deduce by the results in section 2.2 (see (\ref{eq:2.2.5}), (\ref{eq:2.3.5}) and (\ref{eq:2.5.5})) the following:

 For $\alpha=0$ we get by (\ref{eq:2.2.5})
\begin{align*}
{\Pi }_0{(R)}=\left\{R(A\zeta+C): A>0, C \in \mathbb{C}\right\}.
\end{align*}
For $0<\alpha<1$ we get by (\ref{eq:2.5.5})
\begin{align*}
 {\Pi }_\alpha{(R)}=\left\{\frac{L}{(A\zeta+C)^j}:A>0, C \in \mathbb{C}\right\} \ .
 \end{align*}
 And for $-1<\alpha<0$ we have by (\ref{eq:2.3.5})
   \begin{align*}
    {\Pi }_\alpha{(R)}=\left\{\hat{R}_{\beta_n}(\beta_n)((A\zeta+C)^{j_n})^{-1}: A>0,\ C \in \mathbb{C},\ 1 \leq n \leq l \right\} \ .
    \end{align*}
\section{Finding $\Pi _\alpha(Re^P)$}
Let $f(z)=R(z)e^{P(z)}$ where
\begin{equation}
\label{eq:3.1}
 R=\frac{P_1}{P_2},\ P_1(z):=(z-\gamma_1)^{l_1}\dots (z-\gamma_m)^{l_m},
 \end{equation}
 \begin{align*}
 P_2(z):=(z-\beta_1)^{j_1}\cdots(z-\beta_l)^{j_l},\ L_1:=\big|P_1\big|=l_1+\cdots+l_m;
 \end{align*}
 \begin{align*}
  L_2:=\big|P_2\big|=j_1+\cdots+j_l  ,\ L_1,L_2 \geq 0 ,\ L\neq 0 \ .
\end{align*}
The case $R=L_1\frac{P_1}{P_2}$, $L\neq 0,1$ is also included here, i.e., we can assume that $L=1$, since otherwise $L=e^{a'_0}$, $a_0'\neq 0$. We can write $\hat{a}_0=a_0+a'_0$ instead of $a_0$ as the constant coefficient of $P(z)$.

Also let us denote $P(z)=a_kz^k+a_{k-1}z^{k-1}+\dots +a_0$, $a_k \neq 0$. We wish to find $\Pi _\alpha(f)$ for $-1<\alpha<1$, but first we need some preparation.
\subsection{Auxiliary lemmas and a remark}
\begin{lemma}
Let $f$ be a nonconstant meromorphic function in $\mathbb{C}$ and $-1<\alpha<1$. Then
\begin{enumerate}
    \item[{\rm(1)}] If $g(\zeta) \in \Pi _\alpha (f)$ then for every $C \in \mathbb{C}$ $g(\zeta+C) \in \Pi _\alpha(f)$\newline
      and
    \item[{\rm(2)}]  If $e^{a\zeta+b} \in \Pi _\alpha(f)$ then for every $a' \neq 0$ such that $\arg{(a')}=\arg{(a)}$ and for every $b' \in \mathbb{C}$, $e^{a'\zeta+b'} \in \Pi _\alpha(f)$.
\end {enumerate}
\end{lemma}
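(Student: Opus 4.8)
The plan is to exploit the freedom in choosing the sequences $\{k_n\}$, $\{z_n\}$, $\{\rho_n\}$ that realize a given limit function, and to transform a valid realization of $g$ into a valid realization of the shifted or rescaled function. Both parts are structural statements about $\Pi_\alpha(f)$: they say the collection is closed under horizontal translation of the variable, and (for exponential members) under replacing the exponential rate by any other rate of the same argument. Since the whole machinery of the LPZ Lemma produces $g$ via $\rho_n^{-\alpha} f(k_n z_n + k_n \rho_n \zeta) \stackrel{\chi}{\Rightarrow} g(\zeta)$, the natural strategy is: start from sequences realizing $g$, then modify them so that the same convergence process yields the target function.

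For part (1), suppose $g(\zeta) \in \Pi_\alpha(f)$, so there are sequences with $\rho_n^{-\alpha} f(k_n z_n + k_n \rho_n \zeta) \stackrel{\chi}{\Rightarrow} g(\zeta)$. Fix $C \in \mathbb{C}$. First I would absorb the shift into the center point: replace $z_n$ by $z_n' := z_n + \rho_n C$, keeping $k_n$ and $\rho_n$ unchanged. Then
\begin{align*}
\rho_n^{-\alpha} f(k_n z_n' + k_n \rho_n \zeta) = \rho_n^{-\alpha} f\big(k_n z_n + k_n \rho_n (\zeta + C)\big) \stackrel{\chi}{\Rightarrow} g(\zeta + C),
\end{align*}
where the convergence is just the original convergence evaluated at the shifted argument, which is locally uniform in $\zeta$ because it is locally uniform in $\zeta + C$. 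The only bookkeeping is to check that $z_n' \to z_0$ still holds (it does, since $\rho_n C \to 0$) and that $k_n z_n'$ still converges in the extended sense; since $k_n z_n' = k_n z_n + k_n \rho_n C$ and both $k_n z_n$ and $k_n \rho_n$ converge by assumption, this is automatic. Hence $g(\zeta + C) \in \Pi_\alpha(f)$.

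For part (2), assume $e^{a\zeta + b} \in \Pi_\alpha(f)$ with realizing sequences $k_n, z_n, \rho_n$. Given $a' \neq 0$ with $\arg(a') = \arg(a)$ and $b' \in \mathbb{C}$, I would rescale $\rho_n$ by the positive real factor $t := |a'|/|a| > 0$, setting $\rho_n' := t\,\rho_n$, and adjust the center to correct the additive constant. The factor $t$ is exactly what rescales the exponential rate from $a$ to $a'$ while preserving its argument: writing the convergence in the form $\rho_n^{-\alpha} f(k_n z_n + k_n \rho_n \zeta) \stackrel{\chi}{\Rightarrow} e^{a\zeta+b}$ and substituting $\zeta \mapsto t\zeta$ turns $e^{a\zeta}$ into $e^{at\zeta}$; choosing $t = |a'|/|a|$ and using $\arg(a') = \arg(a)$ gives $at = a'$, while the global prefactor $t^{-\alpha}$ (coming from $\rho_n'^{-\alpha} = t^{-\alpha}\rho_n^{-\alpha}$) only rescales $e^b$ by a positive constant, which can be further corrected. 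To fix the additive constant from $b$ to the prescribed $b'$, I then use part~(1): since the exponential $e^{a'\zeta + \tilde b}$ obtained after rescaling already lies in $\Pi_\alpha(f)$ for some $\tilde b$, any horizontal shift $\zeta \mapsto \zeta + C_0$ sends it to $e^{a'\zeta + (a' C_0 + \tilde b)}$, and choosing $C_0 = (b' - \tilde b)/a'$ (legitimate since $a' \neq 0$) produces exactly $e^{a'\zeta + b'}$.

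The main obstacle, and the step requiring the most care, is verifying that after rescaling $\rho_n \mapsto t\rho_n$ the admissibility conditions of the LPZ Lemma are preserved and the prefactor is correctly tracked. Concretely one must confirm that $\rho_n' = t\rho_n \to 0^+$ (immediate since $t>0$), that the rescaled sequences still satisfy $k_n z_n \to z_0$ and that $\{k_n \rho_n'\} = \{t\,k_n\rho_n\}$ converges in the extended sense (both inherited from the originals), and above all that the leftover scalar $t^{-\alpha}$ is absorbed correctly so that the limit is $e^{a'\zeta}$ times a nonzero constant rather than something degenerate; tracking this scalar, together with the substitution $\zeta \mapsto t\zeta$ inside $f(k_n z_n + k_n \rho_n \zeta)$, is where the bookkeeping is genuinely delicate, whereas the reduction of the additive constant to part~(1) is routine.
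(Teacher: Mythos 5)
Your proposal is correct and follows essentially the same route as the paper: for (1) absorb the shift into the center via $z_n' = z_n + \rho_n C$, and for (2) rescale $\rho_n$ by the positive real $a'/a$ (your $t=|a'|/|a|$), track the resulting factor $t^{-\alpha}$ as a harmless adjustment of the constant, and then invoke part (1) to move the additive constant to any prescribed $b'$. The only difference is cosmetic: you spell out the final shift $C_0=(b'-\tilde b)/a'$ explicitly, which the paper leaves implicit in the phrase ``by (1) we can replace $b+b_0$ with every $b'\in\mathbb{C}$.''
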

\begin{proof}
Suppose that $g \in \Pi _\alpha{(f)}$, then we have $\frac{f(k_nz_n+k_n\rho_n(\zeta+C))}{\rho_n^\alpha}\stackrel{\chi}{\Rightarrow} g(\zeta)$ in $\mathbb{C}$, with $\rho_n \rightarrow 0^+$, $z_n \rightarrow z_0$ and $k_n \in \mathbb{N}$. We set $\rho'_n=\rho_n$, $z'_n=z_n+\rho_nC \rightarrow z_0$ and get
\begin{align*}
\frac{f(k_nz'_n+k_n\rho'_n\zeta)}{{\rho'}_n^\alpha}=\frac{f(k_nz_n+k_n\rho_n(\zeta+C))}{\rho_n^\alpha} \stackrel{\chi}{\Rightarrow}g(\zeta+C) \ ,
\end{align*}
 and this proves (1). For the proof of (2) assume that \newline $\frac{f(k_nz_n+k_n\rho_n\zeta)}{\rho_n^\alpha}\stackrel{\chi}{\Rightarrow} e^{a\zeta+b}$ in $\mathbb{C}$. Define for $a'$ with $\arg{a'}=\arg{a}$, $\rho'_n=\frac{a'}{a}\rho_n \rightarrow 0^+$ and $(\frac{A'}{A})^{-\alpha}=e^{b_0}$, where $b_0\in\mathbb{R}$. We have
 \begin{align*}
  \frac{f(k_nz_n+k_n\rho'_n \zeta)}{(\rho'_n)^\alpha}=\frac{f(k_nz_n+k_n\rho_n(\frac{a'}{a}\zeta))}{{\rho}_n^\alpha (a'/a)^\alpha} \stackrel{\chi}{\Rightarrow} g(\frac{a'}{a}\zeta)e^{b_0}=e^{a'\zeta+b+b_0} \ .
  \end{align*}
   By (1) we can replace $b+b_0$ with every $b' \in \mathbb{C}$. This completes the proof of the lemma.
\end{proof}
\begin{remark*}
Let $F$ be a family of non-vanishing holomorphic functions which is not normal at $z_0$ and let $-1<\alpha<1$. Then the convergence process (\ref{eq:1.1}) in the LPZ Lemma guarantees a limit function $g(\zeta)$ with $g^{\#}(\zeta)\leq 1$ for every $\zeta \in \mathbb{C}$. By a theorem of Clunie and Hayman \cite[Theorem 3]{CH}, the order of $g$ is at most $1$ and since $g(\zeta)\neq 0$, $\zeta \in \mathbb{C}$, by Hurwitz's Theorem we deduce that $g(\zeta)=e^{a\zeta+b}$. The results which we will prove in the detailed process of calculating $\Pi _\alpha(Re^P)$ are indeed consistent with this theorem of Clunie and Hayman.
\end{remark*}
\begin{lemma}
Let $f=Re^P$ be given by (\ref{eq:3.1}). Then the points where $F(f)$ is not normal in $\mathbb{C}$ are exactly
\begin{equation}
\left\{\bigcup_{l=0}^{k-1}R_{\theta_k^{+}(l)}\right\}\bigcup\left\{\bigcup_{l=0}^{k-1}R_{\theta_k^{-}(l)}\right\}
\label{eq:3.2}
\end{equation}
where for every $0 \leq l \leq k-1$, $\theta_k^{+}(l)$ and $\theta_k^{-}(l)$ are defined by \newline $\theta_k^{\pm}(l)=\frac{\pm\frac{\pi}{2}-\arg{a_k}}{k}+\frac{2\pi l}{k}$ and $\arg{a_k}$ is taken to be in $[0,2 \pi)$.
\end{lemma}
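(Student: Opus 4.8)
The plan is to read off the normality behaviour of $\mathcal{F}(f)$ at a point $z\neq 0$ from the asymptotic size of $|f(nz)|$ as $n\to\infty$. Writing $z=re^{i\theta}$ with $r>0$ and using $P(z)=a_kz^k+\cdots+a_0$ together with the rationality of $R$, I would first record the estimate
\begin{align*}
\log|f(nz)|=\operatorname{Re}P(nz)+\log|R(nz)|=|a_k|\,n^k r^k\cos(\arg a_k+k\theta)+O(n^{k-1}),
\end{align*}
where the error term absorbs the lower-order monomials of $P(nz)$, each of which is $O(n^{k-1})$, and the contribution $\log|R(nz)|=O(\log n)$ coming from the polynomial growth of a rational function at infinity. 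Hence, whenever $\cos(\arg a_k+k\theta)\neq 0$, the leading term dominates and $\log|f(nz)|\to+\infty$ or $-\infty$ according to the sign of the cosine.

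Next I would identify the critical directions as the vanishing locus $\cos(\arg a_k+k\theta)=0$, i.e.\ $\arg a_k+k\theta\equiv\pm\tfrac{\pi}{2}\pmod{2\pi}$, which solves to $\theta=\theta_k^{\pm}(l)$, $0\le l\le k-1$, giving exactly the $2k$ rays listed in (\ref{eq:3.2}). These rays divide the punctured plane into $2k$ open sectors on which $\cos(\arg a_k+k\theta)$ keeps a fixed sign, alternating between sectors. On an open sector where the cosine is negative $f(nz)\to 0$ locally uniformly, and on one where it is positive $f(nz)\to\infty$ locally uniformly; in either case $\mathcal{F}(f)$ converges to a finite or infinite constant near such a point, so it is normal there. (For fixed $z\neq 0$ the poles of $R(nz)$, located at $z=\gamma_i/n$ and $z=\beta_j/n$, tend to the origin and cause no trouble.)

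Conversely, let $z_0\neq 0$ lie on one of the rays in (\ref{eq:3.2}). Every neighbourhood of $z_0$ meets both an open sector on which $f(nz)\to\infty$ and one on which $f(nz)\to 0$. If $\mathcal{F}(f)$ were normal at $z_0$, some subsequence $f(n_jz)$ would converge spherically and locally uniformly to a limit $h$ that is either meromorphic or identically $\infty$; but then $h$ would equal $0$ on one open set and $\infty$ on another, which is impossible. Hence $\mathcal{F}(f)$ is not normal at $z_0$. Finally, the origin lies on every ray $R_{\theta_k^{\pm}(l)}$ and is already known to be a point of non-normality, so it is accounted for. Combining these observations shows that the non-normality set is precisely (\ref{eq:3.2}).

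The main obstacle I expect is upgrading the convergence in the open sectors from pointwise to genuinely locally uniform: one must verify that on a compact subset $K$ of an open sector the factor $\cos(\arg a_k+k\theta)$ stays bounded away from $0$ while $r$ stays bounded away from both $0$ and $\infty$, so that the leading term $|a_k|n^kr^k\cos(\arg a_k+k\theta)$ dominates the $O(n^{k-1})$ and $O(\log n)$ remainders uniformly over $K$. The only other point needing care is the clean formulation of the non-normality step, namely that a spherically locally uniform limit of meromorphic functions is meromorphic or $\equiv\infty$ and therefore cannot assume the values $0$ and $\infty$ on disjoint open sets.
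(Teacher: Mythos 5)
Your proposal is correct and is essentially the paper's own argument: the paper likewise splits the punctured plane into the $2k$ open sectors determined by the rays, shows $f(nz)\to 0$ or $f(nz)\to\infty$ uniformly on closed disks inside a sector (it tracks $\arg(a_k(nz)^k)$ staying bounded away from $\pm\pi/2$, which is just another way of saying your $\cos(\arg a_k+k\theta)$ is bounded away from $0$), and concludes non-normality on the rays because every neighbourhood of a ray point contains points with both limiting behaviours. Your write-up is somewhat more explicit about the uniformity of the error terms and about why mixed $0/\infty$ behaviour contradicts normality, but the route is the same.
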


Observe that for every $0 \leq {l \neq j} \leq k-1$, $\theta_k^{\pm}(l) \neq \theta_j^{\pm}(l)$.
\begin{proof}
For every $z_0 \neq 0$ that is not in the union (\ref{eq:3.2}) there exist $r>0$ and $0 \leq l \leq k-1$ such that
\begin {equation}
\overline{\Delta}(z_0,r)\subset S\left(\frac{\theta_k^{+}(l)+\theta_k^{-}(l+1)}{2},\frac{\theta_k^{+}(l)-\theta_k^{-}(l+1)}{2}\right)
\label{eq:3.3}
\end{equation}
or that
\begin{equation}
\overline{\Delta}(z_0,r)\subset S\left(\frac{\theta_k^{-}(l)+\theta_k^{+}(l)}{2},\frac{\theta_k^{+}(l)-\theta_k^{-}(l)}{2}\right).
\label{eq:3.4}
\end{equation}
There is some small $\varepsilon_0>0$ such that in the case that (\ref{eq:3.3}) holds, then for every $z \in \Delta(z_0,r)$ and for every $n \in \mathbb{N}$
\begin{align*}
\pi/2+2\pi l +\varepsilon_0<\arg{a_k(nz)^k}<3\pi/2+2\pi l-\varepsilon_0.
\end{align*}
In the case (\ref{eq:3.4}), then for every $z\in \Delta(z_0,r)$
\begin{align*}
 -\pi/2+2\pi l +\varepsilon_0<\arg{(a_k(nz)^k)}<\pi/2+2\pi l-\varepsilon_0.
 \end{align*}

Hence there exists $N_0$, such that if $n>N_0$ and $z \in \Delta(z_0,r)$, then
\begin{align*}
\pi/2+2\pi l+\varepsilon_0/2<\arg{P(nz)}<3\pi/2+2\pi l-\varepsilon_0/2
\end{align*}
 in the case of (\ref{eq:3.4}).

Hence in the case of (\ref{eq:3.3}) $f(nz) \rightarrow 0$ uniformly in $\Delta(z_0,r)$ and in case of (\ref{eq:3.4}) $f(nz) \rightarrow \infty$ uniformly in $\Delta(z_0,r)$, that is, in any case $F(f)$ is normal at $z_0$.

If $z_0$ belongs to one of the $2k$ rays from the union (\ref{eq:3.2}), then any neighbourhood of $z_0$ contains points $z$ where $f(nz)\to 0$ and points $z$ where $f(nz) \rightarrow \infty$. So $F(f)$ is not normal at $z_0$.
\end{proof}
We are now ready to calculate $\Pi _\alpha{(f)}$. We shall do this by separating into 2 cases according to the value of $k=\left|P\right|$ .
\subsection{Calculating $\Pi _\alpha(Re^P)$ for linear polynomial $P(z)$.}
We have $P(z)=a_1z+a_0$, $a_1 \neq 0$. Let $z_0$ be a point where $F(f)$ is not normal. We assume thar for some $-1<\alpha<1$
\begin{equation}
f_{n,\alpha}(\zeta)=\frac{f(k_nz_n+k_n\rho_n\zeta)}{\rho_n^\alpha} \stackrel{\chi}{\Rightarrow}g(\zeta) \ ,
\label{eq:3.5}
\end{equation}
where $z_n \rightarrow z_0$, $\rho_n \rightarrow 0^+$ and $k_n \rightarrow \infty$.\newline
\textbf{Case (A)} $z_0 \neq 0$.

In this case
\begin{equation}
k_nz_n \rightarrow\infty \quad \text{and} \quad \frac{z_n}{\rho_n} \rightarrow \infty \ ,
\label{eq:3.6}
\end{equation}
and thus
\begin{align*}
\frac{R(k_nz_n+k_n\rho_n\zeta)}{(k_nz_n)^{L_1-L_2}} \Rightarrow 1  \ .
\end{align*}
We deduce that
\begin{equation}
\tilde{g}_n(\zeta):=(k_nz_n)^{L_1-l_2}\frac{e^{a_1k_nz_n+a_0}e^{a_1k_n\rho_n\zeta}}{\rho_n^{\alpha}} \Rightarrow g(\zeta) \ .
\label{eq:3.7}
\end{equation}
Since $\tilde{g}_n(\zeta)\neq 0 $ for $\zeta \in \mathbb {C}$, we deduce that $g \neq 0$ in $\mathbb{C}$, i.e., $g=e^{Q}$ where $Q$ is an entire function. With a suitable branch of the logarithm, we have
\begin{align*}
e^{a_1k_nz_n+a_0-\alpha \ln \rho_n+(L_1-L_2)\log k_nz_n+a_1k_n\rho_n\zeta} \Rightarrow e^{Q(\zeta)} \ .
\end{align*}
Thus, there are integers $m_n$ such that
\begin{align*}
 a_1k_nz_n+a_0-\alpha\ln \rho_n+(L_1-L_2)\log k_nz_n+a_1k_n\rho_n\zeta+2\pi i m_n \Rightarrow Q(\zeta)  \ .
 \end{align*}
Hence $Q$ is a linear function, $Q(\zeta)=A_1\zeta+A_0$ and $g(\zeta)=e^{A_0}\cdot e^{A_1 \zeta}$. Substituting $\zeta=0$ in (\ref{eq:3.7}) gives that
\begin{align*}
\frac{(k_nz_n)^{L_1-L_2}e^{a_1k_nz_n+a_0}}{\rho_n^\alpha} \mathop  \to \limits_{n \to \infty }  e^{A_0} \ ,
\end{align*}
and thus
\begin{equation}
a_1k_n\rho_n \to A_1
\label{eq:3.8}
\end{equation}
and $\arg A_1=\arg a_1$.
By (1) and (2) of Lemma 3.1 we deduce that the contribution of $z_0 \neq 0$, point of non-normality of $F(f)$ to $\Pi _\alpha(f)$, is
\begin{equation}
\left\{k_0e^{A_1\zeta}: k_0 \neq 0,\ \arg A_1=\arg a_1 \right\} \ .
\label{eq:3.9}
\end{equation}
Observe that this collection is independent of $\alpha$.\newline
\newline
\textbf{Case (B)} $z_0=0$.

\noindent We separate into subcases according to the behaviour of $\left\{k_nz_n\right\}$.

\noindent $\boldsymbol{k_nz_n \rightarrow b}$, $\boldsymbol{b \in \mathbb{C}}$.

In this case, if $k_n\rho_n \rightarrow \infty$, then when $\alpha \leq 0$ it holds for every $\zeta \neq 0$, $\zeta \in R_\theta$ and $\theta_1^{-}(0)<\theta<\theta_1^{+}(0)$, that $f_{n,\alpha}(\zeta) \mathop \to \limits_{n\rightarrow \infty} \infty$, and this implies that $g\equiv \infty$, a contradiction. If $\alpha \geq 0$, then for every $\zeta \neq 0$, $\zeta \in R_{\theta}$,
\begin{equation}
\theta_1^+(0)<\theta<\theta_1^-(1) \ ,
\label{eq:3.10}
\end{equation}
we have $f_{n,\alpha}(\zeta)\rightarrow 0$ and this also leads to a contradiction.

If $k_n\rho_n \rightarrow a$, $a>0$, then in case that $\alpha>0$, it holds for every $\zeta$ such that $R(a\zeta+b) \neq 0$ that $g(\zeta)=\infty$, and this is impossible.

If $\alpha<0$, then for every $\zeta$ such that $R(a\zeta+b)\neq \infty$, $g(\zeta)=0$, again a contradiction.

So the case $k_nz_n \rightarrow b$, $k_nz_n \rightarrow a>0$ can happen only with $\alpha=0$, and indeed in this case the limit function is $g(\zeta)=f(a\zeta+b)$ and every such function is attained with $k_n=n$, $\rho_n=\frac{a}{n}$, $z_n=\frac{b}{n}$.

So this possibility gives the collection
\begin{equation}
\left\{f(a\zeta+b): a>0, b \in \mathbb{C}\right\}
\label{eq:3.11}
\end{equation}
to $\Pi_0(f)$.
\vskip 10 pt
We are left with the option $k_n\rho_n \rightarrow 0$. We then have that
\begin{equation}
R_{n,\alpha}(\zeta)=\frac{R(k_nz_n+k_n\rho_n\zeta)}{\rho_n^\alpha}\stackrel{\chi}{\Rightarrow} g(\zeta)e^{-P(b_1)}.
\label{eq:3.12}
\end{equation}
If $\alpha=0$ then $g$ is a constant, a contradiction. If $0<\alpha<1$, then in the case that $P_1(z)$ is a constant, $R_{n,\alpha}(\zeta)\Rightarrow \infty$ and $g \equiv \infty$, a contradiction. If $P_1$ is not a constant then necessarily there exists some $1 \leq i \leq m$ such that $k_nz_n\mathop \to \limits_{n \rightarrow \infty}\gamma_i$. We then have
\begin{align*}
 \frac{(k_nz_n-\gamma_i+k_n\rho_n\zeta)^{l_i}}{\rho_n^\alpha}\Rightarrow\frac{g(\zeta)e^{-P(\gamma_i)}}{\tilde{R}_{\gamma_i}(\gamma_i)}  \ .
 \end{align*}
By the case of monome (see (\ref{eq:2.1.75})), we get that
\begin{equation}
g(\zeta)=e^{P(\gamma_i)}\tilde{R}_{\gamma_i}(\gamma_i)(a\zeta+b)^{l_i},\ b\in \mathbb{C},\ a>0
\label{eq:3.13}
\end{equation}
and by the setting of (\ref{eq:2.1.5}), every $g(\zeta)$ of the form (\ref{eq:3.13}) belongs to $\Pi _\alpha{(f)}$ (corresponding to all the roots $\gamma_i$ of $P_1(z)$, $1\leq i\leq m$).

Now, if $-1<\alpha<0$ then $0<-\alpha<1$ and as in Case (B) of (I) in section 2.3, or in (II) in section 2.3, we get that if $P_2(z)$ is a constant and then $g\equiv \infty$, a contradiction. If $P_2(z)$ is not a constant then $k_nz_n\mathop \to\limits_{n\to\infty}\beta_i$ for some $1\leq i \leq l$, and analogously to (\ref{eq:3.13}) we have
\begin{equation}
g(\zeta)=\frac{e^{P(\beta_i)}\hat{R}_{\beta_i}(\beta_i)}{(a\zeta+b)^{j_i}},\ a>0,\ b \in \mathbb{C} \ ,
\label{eq:3.14}
\end{equation}
and conversely, every function $g(\zeta)$ as in (\ref{eq:3.14}), (corresponding to the various roots of $P_2(z)$, $\beta_i$, $1\leq i\leq l$) belongs to $\Pi _\alpha{(f)}$.

\vskip 10 pt
We turn now to the second subcase of Case (B).
\vskip 10 pt

\noindent$\boldsymbol{k_nz_n \rightarrow \infty}$.

 In this situation, if $k_n\rho_n \rightarrow \infty$ and $ \frac{z_n}{\rho_n} \rightarrow \infty$ then (\ref{eq:3.5}) is equivalent to
 \begin{align*}
  \frac{(k_nz_n)^{l_1-l_2}}{(\rho_n^\alpha)}e^{a_1k_nz_n+a_0}e^{a_1k_n\rho_n\zeta}\Rightarrow g(\zeta)  \ ,
  \end{align*}
   and we deduce that we must have $g(\zeta)=k_0e^{a\zeta}$.

    On the other hand, for every $\zeta$, $\zeta \notin R_{\theta_1^+(0)}\bigcup R_{\theta_1^-(0)}$, $g(\zeta)=0$ or $g(\zeta)=\infty$, and this is a contradiction.

     Suppose that $k_n\rho_n \rightarrow \infty$ and $\frac{z_n}{\rho_n}\rightarrow d$, $d \in \mathbb{C}$. Then (\ref{eq:3.5}) can be written as
\begin{equation}
f_{n,\alpha}(\zeta)=\frac{R[k_n\rho_n(\zeta+\frac{z_n}{\rho_n})]e^{a_0}e^{a_1k_n\rho_n(\zeta+\frac{z_n}{\rho_n})}}{\rho_n^\alpha} \stackrel{\chi}{\Rightarrow} g(\zeta) \ .
\label{eq:3.14.5}
\end{equation}
When $\zeta$ belongs to the half plane $\left\{\zeta:-\pi/2<\arg(a_1)+\arg(\zeta+C)<\pi/2 \right\}$ we have $f_{n,\alpha}(\zeta) \rightarrow \infty$ if $\alpha \geq 0$, while if $\alpha \leq 0$, then $f_{n,\alpha}(\zeta)\rightarrow 0$ for every $\zeta$ in the complementary half plane, $\left\{\zeta:\pi/2< \arg(a_1)+\arg(\zeta+C)< 3\pi/2\right\}$, and we have got a contradiction.

To summarize, the possibility $k_nz_n \rightarrow \infty$ and $k_n\rho_n \rightarrow \infty$ does not occur.

Now if $k_n\rho_n \to a$, $a\in \mathbb{C}$ then (\ref{eq:3.5}) is equivalent to (\ref{eq:3.7}) and \newline $g(\zeta)=e^{A+B\zeta}$ and it must be that $a>0$ and $A=a\cdot a_1$.

In order to show that for each $B\in \mathbb{C}$ and for each $A$ satisfying $\arg(A)=\arg(a_1)$, the function $g(\zeta)=e^{A\zeta+B}$ belongs to $\Pi _\alpha{(f)}$, it is enough by Lemma 3.1 to show that one such function is attained (in fact, it is equally easy to show directly that each such function is attained).

Indeed, let us take a sequence of non-zero numbers, $z_0^{(l)}\mathop\to\limits_{l\to\infty}0$ such that for every $l \geq 1$, $\arg{(z_0^{(l)})}=\pi/2-\arg(a_1)$. By the results in Case (A) (see (\ref{eq:3.9})), for every $l \geq 1$ there are sequences, $k_m^{(l)}\mathop\to\limits_{m\to\infty}\infty$, $z_m^{(l)}\mathop\to\limits_{m\to\infty} z_0^{(l)}$ and $\rho_m^{(l)}\mathop\to\limits_{m\to\infty} 0^+$ such that
\begin{align*}
 \frac{f(k_m^{(l)}z_m^{(l)}+k_m^{(l)}\rho_m^{(l)}\zeta)}{\rho_m^{(l)\alpha}} \stackrel{\chi}{\Rightarrow} e^{a_1\zeta} \ .
 \end{align*}
Now for every $n\geq 1$, there is $m_n>n$ such that
\begin{equation}
\Big|k_{m_n}^{(n)}\cdot z_{m_n}^{(n)}\Big|>n,\quad \rho_{m_n}^{(n)}<\frac{1}{n} \quad \text{and} \quad \Big|z_{m_n}^{(n)}-z_0^{(n)}\Big|<\frac{1}{n}  \ ,
\label{eq:3.15}
\end{equation}

\noindent and such that
\begin{align*}
\max_{\zeta\leq n}\Big|\frac{f(k_{m_n}^{(n)}z_{m_n}^{(n)}+k_m^{(n)}\rho_{m_n}^{(n)}\zeta)}{\rho_{m}^{(n)^{\alpha}}}-e^{a_1\zeta}\Big|\leq \frac{1}{n} \ .
\end{align*}
We define now for every $n\geq 1$, $k_n:=k_{m_n}^{(n)}$, $\rho_n:=\rho_{m_n}^{(n)}$, $z_n:=z_{m_n}^{(n)}$. By (\ref{eq:3.15}) we deduce that
\begin{align*}
\frac{f(k_nz_n+k_n\rho_n\zeta)}{\rho_n^{\alpha}}\stackrel{\chi}{\Rightarrow} e^{a_1\zeta} \ ,
\end{align*}
as required (with $k_nz_n\to\infty$ and $k_n\rho_n\to 1$) see (\ref{eq:3.8}).

Hence the collection of limit functions created by the possibility \linebreak $k_nz_n\rightarrow \infty$ and $k_nz_n\to a$, $a\in \mathbb{C}$ is exactly
\begin{equation}
\left\{e^{A\zeta+B}:B\in\mathbb{C} \quad \text{and} \quad \arg{A}=\arg{(A_1)} \right\}.
\label{eq:3.16}
\end{equation}
We can now summarize the results and conclude the assertion of Theorem 1 for the case where $P$ is linear.

For $\alpha=0$, we get by (\ref{eq:3.9}), (\ref{eq:3.11}), and (\ref{eq:3.16}) (and the various contradictions along the way)
\begin{align*}
{\Pi }_0(f)=\left\{e^{a\zeta+b}:\arg{a}=\arg{a_1}, b\in\mathbb{C}\right\}\bigcup \left\{f(a\zeta+b):a>0,\ b\in\mathbb{C}\right\} \ .
\end{align*}
For $0<\alpha<1$, (\ref{eq:3.9}), (\ref{eq:3.13}) and (\ref{eq:3.16}) give
\begin{align*}
{\Pi }_\alpha(f)=\left\{e^{a\zeta+b}:\arg{a}=\arg{a_1},\ b\in\mathbb{C}\right\}
\end{align*}
\begin{align*}
\bigcup \left\{e^{P(\gamma_i)}\tilde{R}_{\gamma_i}(\gamma_i)(a\zeta+b)^{l_i}: a>0,\ b\in\mathbb{C},\ 1\leq i \leq m  \right\}.
\end{align*}

\noindent For $-1<\alpha<0$ we have by (\ref{eq:3.9}), (\ref{eq:3.14}) and (\ref{eq:3.16})
\begin{align*}
{\Pi }_\alpha(f)=\left\{e^{a\zeta+b}:\arg{a}=\arg{a_1},\ b\in\mathbb{C}\right\}
\end{align*}
\begin{align*}
\bigcup \left\{e^{P(\beta_i)}\hat{R}_{\beta_i}(\beta_i)/(a\zeta+b)^{j_i}: a>0, b\in\mathbb{C}, 1\leq i \leq l  \right\}.
\end{align*}
\subsection{Calculating \textbf{$\Pi _\alpha(Re^{P})$} when \textbf{$k=\left|P \right|\geq 2$}.}
We consider (\ref{eq:3.5}) and separate into cases according the behaviour of $\left\{k_nz_n\right\}$.

\noindent \textbf{Case (A)} \quad $\boldsymbol{k_nz_n\to b\in\mathbb{C}}$.

Of course in this case $z_n \to 0$.

If $k_n\rho_n \to\infty$, then if $\alpha\leq 0$ it holds for every non-zero $\zeta$, $\zeta\in R_\theta$, for $\theta_k^{-}(l)<\theta<\theta_k^{+}(l)$, $0\leq l \leq k-1$, that $f_{n,\alpha}(\zeta)\mathop{\to}\limits_{n\to\infty}\infty$ (compare (\ref{eq:3.10})), and this is a contradiction. If $\alpha \geq 0$ then for every non-zero $\zeta$, $\zeta\in R_\theta$, $\theta_k^{+}(l)<\theta<\theta_k^{-}(l+1)$, $f_{n,\alpha}(\zeta)\mathop{\to}\limits_{n\to\infty} 0$, and this is a contradiction.

Hence we deduce that $k_n\rho_n\to a \in \mathbb{C}$.

If $a>0$ and $\alpha \neq 0$, then similarly to the parallel case when \newline $\left|P\right|=k=1$ (Case (B) in section 3.2) we get a contradiction.

The possibility $a>0$ and $\alpha=0$, as in the case $\left|P\right|=1$, gives the collection
\begin{equation}
\left\{f(a\zeta+b):a>0,\ b\in \mathbb{C}\right\}
\label{eq:3.16.5}
\end{equation}
to $\Pi_0(f)$.\newline
We are left with the possibility $k_n\rho_n \to 0$. We then get that
\begin{align*}
\frac{R(k_nz_n+k_n\rho_n\zeta)}{\rho_n^\alpha}\stackrel{\chi}{\Rightarrow} g(\zeta)e^{-P(b)} \ ,
\end{align*}
that is, $\tilde{g}:=g\cdot e^{-P(b)}$ belongs to ${\Pi }_\alpha(R)$. Thus, in the case $0<\alpha<1$ we get by the discussion in section 2.3 that for some $1\leq i_0 \leq m$, $b=\gamma_{i_0}$ (in case $\big| P_1 \big|>0$, otherwise we get a contradiction) and consider all $\gamma_i$, $1 \leq i \leq m$, we get from (\ref{eq:2.8}) that the case $k_n\rho_n \to 0$, $k_nz_n\to b\in \mathbb{C}$ gives the collection
\begin{equation}
\label{eq:3.17}
\bigcup_{i=1}^{m}\left\{e^{P(\gamma_i)}\tilde{R}_{\gamma_i}(\gamma_i)(A_1\zeta+A_2)^{l_1},\ A_1>0,\ A_2\in \mathbb{C} \right\}
\end{equation}
to $\Pi_0(f)$.\newline
In the case $-1<\alpha<0$ we get (similarly to the parallel subcase in Case~(B) in Section 3.2) the collection
\begin{equation}
\bigcup_{i=1}^{l}\left\{e^{P(\beta_i)}\hat{R}_{\beta_i}(\beta_i)(A_1\zeta+A_2)^{-j_1},\ A_1>0,\ A_2\in \mathbb{C} \right\} \ .
\label{eq:3.18}
\end{equation}
The case $\alpha=0$ leads to a contradiction, similarly to the parallel case in Case~(B) in Section 3.2.

\noindent \textbf{Case (B)} \quad $\boldsymbol{k_nz_n \to \infty}$

We have $z_n\to z_0$, and in this case both options $z_0=0$ or $z_0 \neq 0$ are possible. First we deal with the option $z_0\neq 0$, i.e. $z_0=re^{i\theta_0}$ where  $\theta_0$ is one of the arguments of the $2k$ rays from (\ref{eq:3.2}). Since $\frac{\rho_n}{z_n}\to 0$, then (\ref{eq:3.5}) is equivalent to
\begin{equation}
(k_nz_n)^{L_1-L_2}\frac{e^{P(k_nz_n+k_n\rho_n\zeta)}}{\rho_n^\alpha} \stackrel {\chi}{\Rightarrow}g(\zeta) \ .
\label{eq:3.19}
\end{equation}
By Hurwitz's Theorem $g(\zeta)=e^{Q(\zeta)}$, where $Q$ is an entire function. For a suitable branch of the logarithm, we have
\begin{align*}
e^{P(k_nz_n+k_n\rho_n\zeta)+(L_1-L_2)\log{k_nz_n}-\alpha \ln{\rho_n}}\Rightarrow e^{Q(\zeta)} \ .
\end{align*}
Thus, there exist integers $\left\{m_n\right\}$ such that
\begin{equation}
{P(k_nz_n+k_n\rho_n\zeta)+(L_1-L_2)\ln{\left|k_nz_n\right|}+i(L_1-L_2)(\theta_0+\varepsilon_n)-\alpha \ln{\rho_n}+2\pi m_n}
\label{eq:3.20}
\end{equation}
\begin{align*}
\Rightarrow Q(\zeta) \ ,
\end{align*}
where $\varepsilon_n \in \mathbb{R}$, $\varepsilon_n \to 0$.

We conclude that $Q$ is a polynomial of degree $\left|Q\right|\leq k$. Denote $Q(z)=A_0+A_1\zeta+...+A_k\zeta^k$. Comparing coefficients of the two sides of (\ref{eq:3.20}) gives the following relations

\begin{equation}
\begin{array}{l}
 a_k (k_n \rho _n )^k  \mathop{\to}\limits_{n\to\infty} A_k  \\
 \,\,\,\,\,\,\,\,\,\,\,\,\,\, \vdots  \\
 a_k \left( {k_n \rho _n } \right)^{k - i} \left( \begin{array}{l}
 k \\
 k - i \\
 \end{array} \right)\left( {k_n z_n } \right)^i  \to A_{k - i}  \\
 \,\,\,\,\,\,\,\,\,\,\,\,\, \vdots  \\
 a_k \left( {k_n \rho _n } \right)\left( \begin{array}{l}
 k \\
 1 \\
 \end{array} \right)\left( {k_n z_n } \right)^{k - 1}  \to A_1  \\
 a_k (k_n z_n  - \alpha _1 )...(k_n z_n  - \alpha _k ) + \left( {L_1  - L_2 } \right)\ln \left| {k_n z_n } \right|\\
  +i\left( {L_1  - L_2 } \right)(\theta _0  + \varepsilon _n ) - \alpha \ln \rho _n  + 2\pi im_n  \to A_{_0 }  \\
 \end{array}
 \label{eq:3.21}
 \end{equation}

 Now, if $k_n\rho_n \rightarrow \infty$, then by the relation of $A_k$ in (\ref{eq:3.21}) we deduce that $A_k=\infty$, a contradiction. If $k_n\rho_n \to a$, $a>0$ then by the relation for $A_{k-1}$ in (\ref{eq:3.21}), we get that $A_{k-1}=\infty$ (here we use $k \geq 2$), a contradiction. Hence we deduce that $k_n\rho_n \to 0$. Then from (\ref{eq:3.21}), we see that if $A_i \neq 0$ for some $2 \leq i \leq k$, then $A_{i-1}=\infty$. Thus $A_i=0$ for $2\leq i \leq k$.

 We can assume that $A_1\neq 0$ (since otherwise $g$ is a constant function), and so $g(\zeta)=e^{A_0+A_1\zeta}$. By (\ref{eq:3.21}) and (\ref{eq:3.2}) we get
 \begin{equation}
 \label{eq:3.22}
 \arg{A_1}=\arg{a_k}+(k-1)\theta_0=\frac{(k-1)(\pm \pi/2)+\arg{a_k}+(k-1)2\pi l}{k}
 \end{equation}
 for some $0 \leq l \leq k-1$.

 We observe that in (\ref{eq:3.22}) there are $2k$ different arguments.

 By the fact that there must be some limit function $g$ and by Lemma~3.1, we obtain that the possibility $z_n\to z_0 \neq 0$ gives (for every $-1<\alpha<1$) the collection
 \begin{equation}
 \bigcup_{l=0}^{k-1}\left\{e^{A_0+A_1\zeta}:A_0\in \mathbb{C}, \ \arg{A_1}=\frac{\arg{a_k}+(k-1)(\pm \pi/2)+(k-1)2\pi l}{k} \right\}
 \label{eq:3.23}
\end{equation}
 to $\Pi_\alpha(f)$.

 We turn now to the case where $k_nz_n \rightarrow \infty$ and $z_n \to 0$.
 \vskip 10 pt
 \begin{claim} $k_n\rho_n \to 0$.
 \end{claim}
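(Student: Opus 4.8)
The plan is to argue by contradiction: assuming $k_n\rho_n\not\to 0$, I pass to a subsequence along which $k_n\rho_n$ converges in the extended sense to some $a\in(0,\infty]$, and along which $z_n/\rho_n=(k_nz_n)/(k_n\rho_n)$ and $\arg(k_nz_n)$ also converge (permissible by the blanket subsequence assumption). The goal is to reproduce, in the present subcase $k_nz_n\to\infty$, $z_n\to z_0=0$, the same coefficient-comparison obstruction used when $z_0\neq 0$, namely the relations (\ref{eq:3.21}); the degree hypothesis $k\ge 2$ is what makes the second coefficient blow up.

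First I would dispose of the possibility $k_n\rho_n\to\infty$, splitting according to $z_n/\rho_n$. If $z_n/\rho_n\to D\in\mathbb{C}$, then writing $w=k_nz_n+k_n\rho_n\zeta=k_n\rho_n(\zeta+z_n/\rho_n)$ one has $|w|\to\infty$ and $\arg w\to\arg(\zeta+D)$ for $\zeta\neq -D$, so the sign of $\mathrm{Re}\,P(w)\sim|a_k|\,|w|^k\cos(\arg a_k+k\arg(\zeta+D))$ is governed by the phase; on the non-empty open set where this cosine is positive, $\mathrm{Re}\,P(w)\to+\infty$ dominates both the polynomial factor $R(w)$ and the power $\rho_n^{-\alpha}$, forcing $g\equiv\infty$ there, a contradiction (the analogue of the half-plane argument used for $k=1$). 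If instead $z_n/\rho_n\to\infty$, then on compact $\zeta$-sets $w\sim k_nz_n\to\infty$, so $R(w)/(k_nz_n)^{L_1-L_2}\to 1$ and (\ref{eq:3.5}) reduces to the form (\ref{eq:3.19}); since the left side never vanishes, Hurwitz gives $g=e^{Q}$, and taking a branch of the logarithm together with suitable integers $m_n$ yields convergence of a polynomial in $\zeta$ of degree $\le k$ to $Q$, hence the coefficient relations (\ref{eq:3.21}). The coefficient of $\zeta^k$ is $a_k(k_n\rho_n)^k\to\infty$, so $A_k=\infty$, which is impossible.

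It then remains to exclude $k_n\rho_n\to a$ with $0<a<\infty$. Here $z_n/\rho_n=(k_nz_n)/(k_n\rho_n)\to\infty$ automatically because $k_nz_n\to\infty$, so the same reduction applies: $R(w)/(k_nz_n)^{L_1-L_2}\to 1$, $g=e^Q$ with $Q$ of degree $\le k$, and (\ref{eq:3.21}) holds. Now the coefficient of $\zeta^{k-1}$ equals $(k_n\rho_n)^{k-1}\big(k\,a_k(k_nz_n)+a_{k-1}\big)$, and since $k_n\rho_n\to a>0$ while $k_nz_n\to\infty$, this tends to $\infty$; as $k\ge 2$ there genuinely is such a coefficient, so $A_{k-1}=\infty$, again impossible. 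Having ruled out every $a\in(0,\infty]$, I conclude $k_n\rho_n\to 0$.

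I expect the main obstacle to be the rigorous passage $e^{(\cdots)}\Rightarrow e^{Q}\implies(\cdots)\Rightarrow Q$ and the deduction of (\ref{eq:3.21}): one must check that the (fixed, finite) zeros and poles of $R$ have $\zeta$-preimages $\zeta=(p-k_nz_n)/(k_n\rho_n)$ escaping to $\infty$ — which holds precisely because $z_n/\rho_n\to\infty$ in the two surviving subcases — so that Hurwitz applies and $g$ is genuinely zero- and pole-free, and that the $\zeta$-independent corrections $(L_1-L_2)\log(k_nz_n)-\alpha\ln\rho_n+2\pi i m_n$ collapse into the constant coefficient, leaving the higher coefficients determined purely by $P(w)$. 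Once this bookkeeping is in place the blow-up of the top two coefficients (for $a=\infty$ and $a$ finite, respectively) is immediate, and the degree condition $k\ge 2$ enters exactly through the existence of the $\zeta^{k-1}$ coefficient.
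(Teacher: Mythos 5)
Your overall structure is the paper's own: the same split into $k_n\rho_n\to\infty$ (subdivided according to whether $z_n/\rho_n\to\infty$ or $z_n/\rho_n\to D\in\mathbb{C}$) and $k_n\rho_n\to a\in(0,\infty)$, the same reduction to the coefficient relations (\ref{eq:3.21}), and the same two contradictions: $A_k=\infty$ in the first subcase and $A_{k-1}=\infty$ (this is where $k\ge 2$ enters) in the last. Your remarks that $z_n/\rho_n\to\infty$ is automatic when $k_n\rho_n\to a>0$, and that Hurwitz applies because the $\zeta$-preimages of the zeros and poles of $R$ escape to infinity while the corrections $(L_1-L_2)\log(k_nz_n)-\alpha\ln\rho_n+2\pi i m_n$ collapse into $A_0$, are exactly the bookkeeping the paper's citation of (\ref{eq:3.19})--(\ref{eq:3.21}) presupposes.

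There is, however, one genuine error, in the subcase $k_n\rho_n\to\infty$, $z_n/\rho_n\to D$: you claim that on the sector where $\cos(\arg a_k+k\arg(\zeta+D))>0$ the term $\mathrm{Re}\,P(w)\to+\infty$ dominates the power $\rho_n^{-\alpha}$, forcing $g\equiv\infty$ there. For $-1<\alpha<0$ this domination is false: on compact $\zeta$-sets $\mathrm{Re}\,P(w)$ grows only like $(k_n\rho_n)^k$, while $-\alpha\ln\rho_n$ has magnitude $|\alpha|\,|\ln\rho_n|$, and nothing ties these two rates together. For instance, $\rho_n=e^{-n}$, $k_n=\lceil n^{1/(2k)}e^n\rceil$, $z_n=D\rho_n$ satisfies every hypothesis of this subcase ($k_n\rho_n\approx n^{1/(2k)}\to\infty$, $k_nz_n\to\infty$, $z_n\to 0$), yet $(k_n\rho_n)^k\approx n^{1/2}=o(|\ln\rho_n|)$, so for $\alpha<0$ one gets $\ln|f_{n,\alpha}(\zeta)|\le O(n^{1/2})-|\alpha|\,n\to-\infty$ locally uniformly: the functions tend to $0$ on your sector (indeed everywhere), not to $\infty$. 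The contradiction still exists in this regime, but only because $g\equiv 0$ is constant, which is not the mechanism you invoke; and in the intermediate regime $(k_n\rho_n)^k\asymp|\ln\rho_n|$ the limit is $0$ on part of the sector and $\infty$ on another part, so no single constant works. The paper's fix (in the parallel passage (\ref{eq:3.14.5}) for $k=1$, which the proof of this claim cites) avoids any rate comparison by choosing the sector according to the sign of $\alpha$: for $\alpha\ge 0$ use the positive-cosine sector, where $\mathrm{Re}\,P(w)\to+\infty$ and $-\alpha\ln\rho_n\ge 0$ both push $|f_{n,\alpha}|$ to $\infty$; for $\alpha\le 0$ use the negative-cosine sector, where $\mathrm{Re}\,P(w)\to-\infty$ and $-\alpha\ln\rho_n\le 0$ both push $f_{n,\alpha}$ to $0$. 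With this one repair your argument coincides with the paper's proof.
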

 \begin{proof}
 If $k_n\rho_n \to \infty$, then in the case that $\frac{z_n}{\rho_n}\to \infty$, (\ref{eq:3.19})-(\ref{eq:3.21}) hold and we get a contradiction by the relation for $A_k$ in (\ref{eq:3.21}). If $\frac{z_n}{\rho_n}\to b \in\mathbb{C}$, we get a contradiction similarly to the parallel case in section 3.2 (see (\ref{eq:3.14.5})).

 If on the other hand, $k_n\rho_n\to a$, $0<a<\infty$, then the relations in (\ref{eq:3.21}) hold, and by the relation of $A_{k-1}$ we get that $A_{k-1}=\infty$, a contradiction and the claim is proven.
 \end{proof}
 We can deduce now, as in the case where $z_n \to z_0\neq 0$, that \newline $g(\zeta)=e^{A_1\zeta+A_0}$ and for $A_1$, $A_0$ the two last relations in (\ref{eq:3.21}) hold,  respectively.

 We separate now according to the value of $\alpha$.
  \vskip 10 pt
 \noindent \textbf{Case (B1)} \quad $\boldsymbol{0<\alpha<1}$.\newline We can assume that $\arg(z_n) \to \theta_0$.
 \begin{claim} There is some $0\leq l \leq k-1$, such that $\pi/2+2\pi l\leq \arg{a_k}+k\theta_0 \leq 3\pi/2+2\pi l$.
 \end{claim}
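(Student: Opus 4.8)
The plan is to read off the constraint on $\theta_0$ from the requirement that the limit $g(\zeta)=e^{A_1\zeta+A_0}$ be finite at the single point $\zeta=0$; concretely, I would exploit the relation for $A_0$ in (\ref{eq:3.21}), which is already available to us once we know $k_n\rho_n\to 0$ and $g(\zeta)=e^{A_1\zeta+A_0}$.

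First I would take real parts in that relation. The terms $i(L_1-L_2)(\theta_0+\varepsilon_n)$ and $2\pi i m_n$ are purely imaginary, so the real part collapses to
\[
\mathrm{Re}\,P(k_nz_n)+(L_1-L_2)\ln|k_nz_n|-\alpha\ln\rho_n\ \longrightarrow\ \mathrm{Re}\,A_0,
\]
a finite limit. Since $k_n\in\mathbb{N}$ we have $\arg(k_nz_n)=\arg(z_n)\to\theta_0$ while $|k_nz_n|\to\infty$, so the leading term of $\mathrm{Re}\,P(k_nz_n)$ equals $|a_k|\,|k_nz_n|^k\cos(\arg a_k+k\arg z_n)$, whose cosine factor converges to $c:=\cos(\arg a_k+k\theta_0)$.

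The decisive step is the sign analysis. Assume for contradiction that $c>0$, which is exactly the negation (mod $2\pi$) of the asserted inequality. Then for large $n$ the leading term exceeds $\tfrac{c}{2}|a_k|\,|k_nz_n|^k$, hence dominates both the lower-degree terms of $P$ and the logarithmic term $(L_1-L_2)\ln|k_nz_n|$, giving $\mathrm{Re}\,P(k_nz_n)\to+\infty$. Because $0<\alpha<1$ and $\rho_n\to 0^+$, we also have $-\alpha\ln\rho_n\to+\infty$. Thus the left-hand side is a sum of two quantities tending to $+\infty$ (modulo a lower-order logarithmic correction) and so diverges to $+\infty$, forcing $|g(0)|=\infty$ and contradicting $g=e^{A_1\zeta+A_0}$. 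Consequently $c\le 0$, i.e.\ $\arg a_k+k\theta_0$ lies mod $2\pi$ in $[\pi/2,3\pi/2]$; normalizing $\theta_0\in[0,2\pi)$ and $\arg a_k\in[0,2\pi)$ then pins down an index $0\le l\le k-1$ with $\pi/2+2\pi l\le\arg a_k+k\theta_0\le 3\pi/2+2\pi l$.

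I expect the only delicate points to be (i) verifying that $\mathrm{Re}\,P(k_nz_n)$ truly diverges to $+\infty$ rather than being cancelled, which rests on the degree-$k$ leading term overwhelming all logarithmic and lower-degree contributions once $|k_nz_n|\to\infty$ with $\arg z_n\to\theta_0$, and (ii) keeping track of the sign of $-\alpha\ln\rho_n$, so that the whole contradiction hinges on $\alpha>0$. This sign is precisely what separates Case (B1) from the cases $\alpha=0$ and $\alpha<0$, where the analogous half-plane condition is reversed.
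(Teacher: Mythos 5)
Your proof is correct and takes essentially the same route as the paper: negate the claim to get $\cos(\arg a_k+k\theta_0)>0$, deduce $\mathrm{Re}\,P(k_nz_n)\to+\infty$, and contradict the finiteness of $\mathrm{Re}\,A_0$ in the last relation of (\ref{eq:3.21}) using $-\alpha\ln\rho_n\to+\infty$ (which is where $\alpha>0$ enters). Your observation that $(L_1-L_2)\ln|k_nz_n|=o\bigl(\mathrm{Re}\,P(k_nz_n)\bigr)$ handles the logarithmic term directly and thereby removes the paper's case split on the limit of $\ln|k_nz_n|/\ln\rho_n$, which your argument shows to be unnecessary.
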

 \begin{proof}
 If it is not the case, then we have $Re(P(k_nz_n))\to +\infty$.
 Without loss of generality, we can assume that $\left\{ \frac{\ln{\big|k_nz_n \big|}}{\ln{\rho_n}}\right\}$ converges (in the extended sense). Now, if
  $\quad \frac{\ln{\big|k_nz_n \big|}}{\ln{\rho_n}}\to b$, $0<b\leq \infty$, then $\frac{-\alpha\ln\rho_n}{Re[P(k_nz_n)]} \to 0$, and since $\frac{\ln\big|k_nz_n\big|}{Re[P(k_nz_n)]}\to 0$, we deduce that the real part of the left side of the relation for $A_0$ in (\ref{eq:3.21}) tends to $+ \infty$, and this is a contradiction. If on the other hand $\frac{\ln\big|k_nz_n\big|}{\ln\rho_n} \to 0$, then since $\alpha>0$ we derive the same conclusion and get a contradiction.

This completes the proof of the claim.
 \end{proof}
 Hence we can write
 \begin{equation}
 \frac{2\pi l}{k}+\frac {\pi/2-\arg{a_k}}{k} \leq \theta _0 \leq \frac{3\pi/2-\arg{a_k}}{k}+\frac{2\pi l}{k}
 \label {eq:3.24}
 \end{equation}
 for some $0\leq l \leq k-1$.

 We denote $\theta_1:=\arg{A_1}$, and by the relation for $A_1$ in (\ref{eq:3.21}) we have $\theta_1=\arg{a_k}+(k-1)\theta_0$ and thus
 \begin{equation}
 \arg{a_k}+\frac{k-1}{k}\left(\frac{\pi}{2}-\arg{a_k}+2\pi l \right)\leq \theta_1 \leq \arg{a_k}+\frac{k-1}{k}\left(\frac{3\pi}{2}-\arg{a_k}+2\pi l\right) \ ,
 \label{eq:3.25}
 \end{equation}
 $ 0\leq l \leq k-1$.

 We show now that for every $\theta_1$ that satisfies (\ref{eq:3.25}), there is $g \in {\Pi }_\alpha(f)$, $g(\zeta)=e^{A_0+A_1\zeta}$ with $\arg{A_1}=\theta_1$.

 Evidently it is enough for this purpose to show that for every $\theta_0$ that satisfies (\ref{eq:3.24}), there are sequences $\left\{k_n\right\}$, $k_n \in \mathbb{N}$, $\left\{m_n\right\}$, $m_n\in \mathbb{Z}$ and $\left\{z_n\right\}$, $\left\{\rho_n\right\}$, $z_n \to 0$ with $\arg{z_n} \to \theta$, $\rho_n \to 0^+$, such that the relations (\ref{eq:3.21}) hold (with $0=A_2=\dots=A_k$, $A_1 \neq 0$ and $A_0 \in \mathbb{C} $ arbitrary).

 We first show it for $\theta_0$ that satisfies (\ref{eq:3.24}) with sharp inequalities (and the corresponding $\theta_1$ will satisfy (\ref{eq:3.25}) with sharp inequalities).

 Indeed,  for $n \geq 2$ define $k_n=n$, $\rho_n=\frac{1}{n^{1+\frac{k-1}{k}\frac{\ln\ln{n}}{\ln n}}}$ and \newline $\hat{z}_n=e^{i\theta_0}(\frac{-\ln{\rho_n}}{n})^{\frac{1}{k}}$.

 Observe that since $k \geq 2$, $k_n\rho_n \to 0$ and $k_n\hat{z}_n \to \infty$, we have
 \begin{align*}
 k_n\rho_n(k_n\hat{z}_n)^{k-1}=\frac{1}{n^{\frac{k-1}{k}\frac{\ln\ln n}{\ln n}}}\big[\big(1+\frac{k-1}{k}\frac{\ln\ln n}{\ln n}\big)\ln n \big]^{\frac{k-1}{k}}
 \end{align*}
 \begin{align*}
 =(1+\frac{k-1}{k}\frac{\ln\ln n}{\ln n})^\frac{k-1}{k} \mathop{\to}\limits_{n\to\infty} 1.
 \end{align*}
 In addition we have $\frac{\big|k_n\hat{z}_n\big|^k}{-\ln{\rho_n}}=1$.
 By the choice of $\theta_0$ (see (\ref{eq:3.24})), we get
 \begin{align*}
 \frac{Re[P(n\hat{z_n})+(L_1-L_2)\ln{|n\hat{z_n}}|]}{|a_k(n\hat{z}_n)^k|}\mathop{\to}\limits_{n\to\infty} \cos(\arg{a_k}+k\theta_0)<0 \ ,
 \end{align*}
  and then we get
  \begin{equation}
\frac{-\alpha\ln{\rho_n}}{-Re[P(n\hat{z}_n)+(L_1-L_2)\ln{|n\hat{z}_n|}]} \to \frac{-\alpha}{|a_k| \cos(\arg{a_k}+k\theta_0)}>0.
  \label{eq:3.26}
  \end{equation}

  Denote $C_0=\frac{-\alpha}{|a_k| \cos(\arg{a_k}+k\theta_0)}$. From (\ref{eq:3.26}) we deduce that for large enough $n$
  \begin{align*}
  \frac{C_0}{2}[-Re[P(n\hat{z}_n)+(L_1-L_2)\ln{|n\hat{z}_n|}]]<-\alpha \ln{\rho_n}
  \end{align*}
  \begin{align*}
  < 2C_0[-Re(P(n\hat{z}_n))+(L_1-L_2)\ln{|n\hat{z}_n|}].
  \end{align*}

  By the Mean Value Theorem there is some $t_n$, $\sqrt[k]{\frac{C_0}{2}}<t_n<\sqrt[k]{2C_0}$ such that
  \begin{equation}
  \label{eq:3.27}
  -Re[P(n\hat{z}_nt_n)+(L_1-L_2)\ln{|n\hat{z}_nt_n|}]=-\alpha \ln{\rho_n}.
  \end{equation}
 ( In fact, it is easy to see that every sequence $\left\{t_n\right\}$ of real numbers that satisfies (\ref{eq:3.27}) must satisfy $t_n \to\sqrt[k]{C_0}$.)

  We set $z_n=t_n\hat{z}_n$ and then the relation for $A_1$ in (\ref{eq:3.21}) holds for some $A_1$ with $\arg{A_1}=\arg{a_k}+(k-1)\theta_0$. By (\ref{eq:3.27}) there are (after moving to subsequence if necessary, that will be denoted with no loss of generality with the same indices) integers $m_n$, $\boldsymbol{n \geq 2}$ such that the relation with regard to $A_0$ in (\ref{eq:3.21}) holds for some $A_0 \in \mathbb{C}$.

  Morever, since $k_n\rho_n \to 0$ and $k_nz_n\to \infty$, we deduce that the relations for $A_2,\dots,A_k$ in (\ref{eq:3.21}) hold and give $0=A_2=A_3=\dots=A_k$.

  The fulfillment of these relations in (\ref{eq:3.21}) means that
  \begin{align*}
  \frac{f(k_nz_n+k_n\rho_n\zeta)}{\rho_n^\alpha}\stackrel{\chi}{\Rightarrow}e^{A_0+A_1\zeta} \ .
  \end{align*}

  By (2) of Lemma 3.1, every function $g$, $g(\zeta)=e^{a\zeta+b}$ with \newline $\arg{a}=\arg{a_k}+(k-1)\theta_0=\theta_1$, and arbitrary $b\in \mathbb{C}$ is in ${\Pi }_\alpha(f)$.

  Now suppose that $\theta_1$ is equal to the left or to the right side of (\ref{eq:3.25}). Without loss of generality,
  \begin{align*}
  \theta_1=\arg{a_k}+\frac{k-1}{k}\left[\frac{3\pi}{2}-\arg{a_k}+2\pi l \right] \ , \ 0 \leq l \leq k-1 \ .
  \end{align*}
   Then we take an increasing sequence, $\left\{\theta_1^{(l)}\right\}_{l=1}^\infty$ such that
  \begin{align*}
  \arg{a_k}+\frac{k-1}{k}\left(\frac{\pi}{2}-\arg{a_k}+2\pi l \right)< \theta_1^{(l)}\mathop{\nearrow}\limits_{l \to \infty} \theta_1.
  \end{align*}
  By the case of sharp inequality in (\ref{eq:3.25}), for every $l \geq 1$, correspond sequences $z_n^{(l)}\mathop{\to}\limits_{n\to\infty} 0$, $\rho_n^{(l)}\mathop{\to}\limits_{n\to\infty}0^+$ such that
  \begin{align*}
  \frac{f(z_n^{(l)}+n\rho_n^{(l)}\zeta)}{\rho_n^{(l)\alpha}}\mathop{\Rightarrow}\limits_{n\to\infty}^{\chi}e^{{e^{i\theta_1^{(l)}}}\zeta} \ .
  \end{align*}
  Since
  \begin{align*}
  e^{e^{i\theta_1(l)}\zeta} \mathop{\Rightarrow}\limits_{l\to\infty} e^{e^{i\theta_1}\zeta} \ ,
  \end{align*}
   then in a similiar way to the case $k_nz_n \to\infty$, \ $k_n\rho_n\to a$ in Case (B) of section 3.2, we deduce the existence of sequences $\rho_n \to 0^+$, $z_n \to 0$, and $\left\{k_n\right\}$ such that
  \begin{align*}
  \frac{f(k_nz_n+k_n\rho_n\zeta)}{\rho_n^\alpha}\mathop{\Rightarrow}\limits_{n\to\infty} e^{e^{i\theta_1}\zeta}.
 \end{align*}
 As usual, by Lemma 3.1 every $g(\zeta)=e^{a\zeta+b}$ with $\arg{a}=\theta_1$ and arbitrary $b \in \mathbb{C}$ belongs to ${\Pi }_\alpha(f)$.

 In order to determine explicitly ${\Pi }_\alpha(f)$, we need to find the range of $\theta_1$ in (\ref{eq:3.25}).
 For $k=2$ we have
 \begin {equation}
 \label{eq:3.28}
 l=0: \quad \frac{\pi}{4}+\frac{\arg {a_2}}{2} \leq \theta_1 \leq \frac{3\pi}{4}+\frac{\arg{a_2}}{2}
 \end{equation}
 \begin{align*}
 l=1: \quad \frac{5\pi}{4}+\frac{\arg{a_2}}{2} \leq \theta_1 \leq\frac{7\pi}{4}+\frac{\arg{a_2}}{2}.
 \end{align*}
 There are two distinct intervals with sum of length $\pi$.
 \begin{claim} For $k\geq 3$ the range of $\theta_1$ in (\ref{eq:3.25}) is $[0,2\pi]$.
 \end{claim}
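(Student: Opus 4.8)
The plan is to read off the endpoints of the $k$ intervals in (\ref{eq:3.25}) and show that, once viewed modulo $2\pi$, they already cover the whole circle as soon as $k\geq 3$. Recall that $\theta_1=\arg a_k+(k-1)\theta_0$ and that $\theta_0$ ranges over the $k$ intervals in (\ref{eq:3.24}), one for each $0\leq l\leq k-1$, each of length $\pi/k$. The affine map $\theta_0\mapsto \arg a_k+(k-1)\theta_0$ scales lengths by $k-1$, so the $l$-th interval for $\theta_1$ in (\ref{eq:3.25}) has length $(k-1)\pi/k$ and left endpoint
\[
L_l:=\arg a_k+\frac{k-1}{k}\left(\frac{\pi}{2}-\arg a_k+2\pi l\right),\qquad 0\leq l\leq k-1 .
\]

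First I would track these left endpoints modulo $2\pi$. Since $L_{l+1}-L_l=\frac{2\pi(k-1)}{k}\equiv-\frac{2\pi}{k}\pmod{2\pi}$, the $k$ numbers $L_0,\dots,L_{k-1}$ reduce modulo $2\pi$ to $k$ equally spaced points on the circle with consecutive gap $2\pi/k$. Sorting them as $0\leq y_0<y_1<\dots<y_{k-1}<2\pi$, we thus have $y_{i+1}-y_i=2\pi/k$ for each $i$ (indices read cyclically, with $y_0+2\pi-y_{k-1}=2\pi/k$).

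The covering step is then immediate. Regarded modulo $2\pi$, each interval starts at some $y_i$ and has length $(k-1)\pi/k$, so it reaches $y_{i+1}$ exactly when $(k-1)\pi/k\geq 2\pi/k$, i.e.\ when $k\geq 3$. Hence for $k\geq 3$ the interval emanating from each $y_i$ already covers the gap $[y_i,y_{i+1}]$, so the union of all $k$ intervals is $[0,2\pi]$; since (\ref{eq:3.25}) uses closed intervals, the boundary case $k=3$ (where the inequality is an equality and the intervals tile edge to edge, meeting only at shared endpoints) is covered as well. The reverse inclusion being trivial, the range of $\theta_1$ is exactly $[0,2\pi]$.

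The one delicate point is the reduction modulo $2\pi$ in the first step: one must observe that the apparently large jump $\frac{2\pi(k-1)}{k}$ between successive left endpoints collapses to a gap of $2\pi/k$ on the circle. The second subtlety is the tight boundary case $k=3$, for which coverage succeeds only because the intervals are closed and abut at their common endpoints; for $k\geq 4$ the intervals overlap with room to spare.
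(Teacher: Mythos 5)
Your proposal is correct and follows essentially the same route as the paper: both arguments compute the common interval length $\frac{k-1}{k}\pi$, observe that the left endpoints are equally spaced modulo $2\pi$ with gap $\frac{2\pi}{k}$ (the paper writes this as $\varepsilon_{l+1}-2\pi+\frac{2\pi}{k}=\varepsilon_l$), and reduce the claim to the covering inequality $\frac{k-1}{k}\pi\geq\frac{2\pi}{k}$, which holds exactly for $k\geq 3$. Your explicit treatment of the tight case $k=3$, where the closed intervals abut end to end, is a welcome point of care that the paper leaves implicit.
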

 \begin{proof}
 Denote for $0 \leq l \leq k-1$, the general interval in (\ref{eq:3.25}) by \linebreak $I_l=[\varepsilon_l,\delta_l]$. The length of $I_l$ is $|I_l|=\pi\frac{k-1}{k}$ and $\varepsilon_{l+1}-2\pi+\frac{2\pi}{k}=\varepsilon_l$. Thus it is enough to show that $\frac{k-1}{k}\pi\geq\frac{2\pi}{k}$ and $\frac{k-1}{k}\pi+(k-1)\frac{2\pi}{k}\geq 2\pi$. It is easy to see that these two inequalities are satisfied for $k \geq 3$. The claim is proven.
 \end{proof}

 As a result, from the claim and from Lemma 3.1, we get that for $k \geq 3$ the posiibility $z_n\to 0$, $k_nz_n\to\infty$ gives the collection (for $0<\alpha<1$)
 \begin {equation}
 \label{eq:3.29}
 \left\{e^{a\zeta+b}: A\neq 0,\ b\in \mathbb{C}\right\}
 \end{equation}
 to $\Pi_\alpha(f)$.

 We turn now to the complemertary case.\newline
\vskip 10 pt
 \textbf{Case (B2)} \quad$\boldsymbol{k_nz_n\to \infty}$, $\boldsymbol{z_n \to 0}$, $\boldsymbol{-1<\alpha<0}$ \newline
 Here again, as $f_{n,\alpha}(\zeta)\stackrel {\chi}{\Rightarrow }g(\zeta)$ if and only if $(\frac{1}{f})_{n,-\alpha}(\zeta)\stackrel{\chi}{\Rightarrow}(\frac{1}{g})(\zeta)$ and since $\frac{1}{f}=\frac{1}{R}e^{-P}$, i.e., a function of the same type we get the following.

 For $k=2$, observe that $\frac{1}{e^{a\zeta+b}}=e^{-a\zeta-b}$ and $\arg{(-a)}=\pi+\arg{a}$ and also the leading coefficient of $-P(z)$ has the argument \newline $\arg(-a_2)=\pi+\arg{(a_2)}$. So we substitute in (\ref{eq:3.25}) (or in (\ref{eq:3.28})) these values (or $\arg{(a)}-\pi$ and $\arg{(a_2)}-\pi$ resp.) instead of $\theta_1$ and $\arg{(a_2)}$, respectively, to get
 \begin{equation}
 \label{eq:3.30}
\frac{\arg{a_2}}{2}+\frac{3\pi}{4}\leq \theta_1 \leq \frac{5\pi}{4}+\frac{\arg{a_2}}{2} \quad or \quad \frac{7\pi}{4}+\frac{\arg{a_2}}{2}\leq\theta_1 \leq \frac{9\pi}{4}+\frac{\arg{a_2}}{2}.
 \end{equation}
 Observe that the set of values of $a\in\mathbb{C}$ correesponds to (\ref{eq:3.30}) is the complement (up to the boundary) of the set of values of $a\in\mathbb{C}$ corresponding to (\ref{eq:3.28}).

 For $k \geq 3$ we get the collection
 \begin{equation}
 \label{eq:3.31}
 \left\{e^{a\zeta+b}:a\neq 0,b\in\mathbb{C}\right\}
 \end{equation}
to $\Pi _\alpha(f)$, exactly as in (\ref{eq:3.29}).

 The last case to treat is \newline
 $\boldsymbol{k_nz_n\to\infty}$, $\boldsymbol{z_n\to 0$, $\alpha=0}$.

 In this case as we saw also, $k_n\rho_n\to 0$.
 Also the relations in (\ref{eq:3.21}) hold and $A_i=0$ for $2\leq i \leq k$, and $A_1 \neq 0$.

 We can assume, without loss of generality, that $\arg{(z_n)}\to \theta_0$. From the relations for $A_0$ in (\ref{eq:3.21}), we get
 \begin{equation}
 \label{eq:3.32}
 \arg{a_k}+k\theta_0=\pm\frac{\pi}{2}+2\pi l \quad \text{for some} \ l\in\mathbb{Z} \ .
 \end{equation}
  And by the relation for $A_1$ in (\ref{eq:3.21}), we get
 \begin{equation}
 \label{eq:3.33}
 \theta_1:=\arg{A_1}=\arg{a_k}+(k-1)\theta_0=\frac{(k-1)(\pm\frac{\pi}{2})+\arg{a_k}+(k-1)2\pi l}{k},
 \end{equation}
 $0\leq l \leq k-1$.

 In the other direction we show now that every function of the form $g(\zeta)=e^{a\zeta+b}$, with $\theta_1=\arg{(a)}$, that satisfies (\ref{eq:3.33}) is obtained in ${\Pi }_0(f)$.

 Indeed, set $\theta_1=\theta_1(\theta_0)=\arg{a_k}+(k-1)\theta_0$.

 For every $\theta_0$ that satisfies (\ref{eq:3.32}) and for every $m \geq 1$, there exist according to (\ref{eq:3.23}) sequences $z_n^{(m)}\mathop{\to}\limits_{n \to\infty}\frac{1}{m}e^{i\theta_0}$, $\rho_n^{(m)}\mathop{\to}\limits_{n\to\infty} 0^+$ and $\left\{k_n^{(m)}\right\}_{n=1}^\infty$ such that
 \begin{align*}
  f(k_n^{(m)}z_n^{(m)}+k_n^{(m)}\rho_n^{(m)}\zeta)\mathop {\Rightarrow}\limits_{n\to\infty}^{\chi} e^{e^{i\theta_0}\zeta} \ .
 \end{align*}
 Hence, we get as in the case $k_nz_n\to\infty$ in Case (B) in Section 3.2 that \linebreak $g(\zeta)=e^{e^{i\theta_0}\zeta}$ is attained as a limit function with $z_n\to 0$ (and $\arg{z_n}=\theta_0$) and $\rho_n\to 0^+$. Then as usual by Lemma~3.1, we obtain that every $g(\zeta)=e^{a\zeta+b}$, with $\arg{a}=\theta_1$ where $\theta_1$ is as in (\ref{eq:3.33}). Thus this option gives the collection
 \begin{equation}
\bigcup_{l=0}^{k-1} \left\{e^{a\zeta+b}:b\in\mathbb{C},\arg{a}=((k-1)(\pm)\frac{\pi}{2}+\arg{a_k}+(k-1)2\pi l)/k\right\}
 \label{eq:3.34}
 \end{equation}

 to $\Pi_0(f)$.

 Observe that not as in the cases $0<\alpha<1$, $-1<\alpha<0$, this case does not add to ${\Pi _\alpha(f)}$, (here $\alpha=0$), new functions.

 Now we can finally collect all the limit functions to fix ${\Pi }_\alpha(f)$ for \linebreak$-1<\alpha<1$ in the case $k \geq 2$.

 \noindent$\boldsymbol{\alpha=0}$\newline
 For every $k\geq 2$ we get by (\ref{eq:3.16.5}), (\ref{eq:3.23}) (and (\ref{eq:3.34}))
 \begin{align*}
 {\Pi }_0(f)=\left\{f(a\zeta+b): a>0, b\in\mathbb{C}\right\} \bigcup
 \end{align*}
 \begin{align*}
 \left\{e^{a\zeta+b}:b\in\mathbb{C},\arg{a}=\frac{\arg{a_k}+(k-1)(\pm\frac{\pi}{2})+(k-1)2\pi l}{k},\quad 0\leq l \leq k-1 \right\} \ .
 \end{align*}\newline
 $\boldsymbol{0< \alpha <1}$ \newline
 For $k=2$ we get by (\ref{eq:3.17}) \textbf{and} (\ref{eq:3.23}) and (\ref{eq:3.28})
 \begin{align*}
 {\Pi }_\alpha(f)=\left\{\bigcup_{i=1}^m \left\{e^{P(\gamma_i)}\tilde{R}_{\gamma_i}(\gamma_i)(a\zeta+b)^{l_i}:a>0,\quad b\in\mathbb{C}\right\}\right\} \bigcup
 \end{align*}
 \begin{align*}
 \left\{e^{a\zeta+b}:\ b\in\mathbb{C},\ \frac{\pi}{4}+\frac{\arg{a_2}}{2}\leq \arg{a} \leq \frac{3\pi}{4}+\frac{\arg{a_2}}{2} \quad \text{or} \quad \right.
 \end{align*}
 \begin{align*}
  \left.\frac{5\pi}{4}+\frac{\arg{a_2}}{2}\leq \arg{a} \leq \frac{7\pi}{4}+\frac{\arg{a_2}}{2}\right\} \ .
 \end{align*}
 For $k \geq 3$ we get by (\ref{eq:3.17}), (\ref{eq:3.23}) \textbf{and} (\ref{eq:3.29})
 \begin{align*}
 {\Pi }_\alpha(f)=\Big[\bigcup_{i=1}^m\left\{e^{P(\gamma_i)}\tilde{R}_{\gamma_i}(\gamma_i)(a\zeta+b)^{l_i}: a>0, b\in \mathbb{C}\right\}\Big] \bigcup
 \end{align*}
 \begin{align*}
 \left\{e^{a\zeta+b}:a\neq 0, b\in \mathbb{C}\right\}.
 \end{align*}
 \noindent$\boldsymbol{-1<\alpha<0}$\newline
 For $k=2$ we get by (\ref{eq:3.18}), (\ref{eq:3.23}) \textbf{and} (\ref{eq:3.30})
 \begin{align*}
 {\Pi }_\alpha(f)=\Big[\bigcup_{i=1}^l \left\{e^{P(\beta_i)}\hat{R}_{\beta_i}(\beta_i)(a\zeta+b)^{-j_i}:a>0, \ b\in\mathbb{C}\right\}\Big]\bigcup
 \end{align*}
 \begin{align*}
 \left\{e^{a\zeta+b}:b\in\mathbb{C}, \frac{-\pi}{4}+\frac{\arg{a_2}}{2}\leq \arg{a} \leq \frac{\pi}{4}+\frac{\arg{a_2}}{2}\right. \quad \text{or} \quad
 \end{align*}
 \begin{align*}
  \left.\frac{3\pi}{4}+\frac{\arg{a_2}}{2}\leq \arg{a} \leq \frac{5\pi}{4}+\frac{\arg{a_2}}{2}\right\} \ .
 \end{align*}

 For $k \geq 3$ (\ref{eq:3.18}), (\ref{eq:3.23}) \textbf{and} (\ref{eq:3.31}) give
 \begin{align*}
 {\Pi }_\alpha(f)=\Big[\bigcup_{i=1}^{l}\left\{e^{P(\beta_i)}\hat{R}_{\beta_i}(\beta_i)(a\zeta+b)^{-j_i}:a>0,b\in\mathbb{C}\right\}\Big]\bigcup
 \end{align*}
 \begin{align*}
 \left\{e^{a\zeta+b}:a\neq 0 , b\in \mathbb{C}\right\}.
 \end{align*}

 The proof of Theorem 1 is completed.

\end {document}